\newtheorem{theorem}{Theorem}[section]
\newtheorem{corollary}[theorem]{Corollary}
\newtheorem{conjecture}{Conjecture}
\tikzstyle{block}=[draw opacity=0.7,linewidth=1.4cm]{\usetikzlibrary{arrows,shapes}}
\definecolor{classicrose}{rgb}{0.98, 0.8, 0.91}
\definecolor{cottoncandy}{rgb}{1.0, 0.74, 0.85}
\definecolor{mediumchampagne}{rgb}{0.95, 0.9, 0.67}
\definecolor{maize}{rgb}{0.98, 0.93, 0.37}
\definecolor{celadon}{rgb}{0.67, 0.88, 0.69}
\definecolor{darkseagreen}{rgb}{0.56, 0.74, 0.56}
\definecolor{pastelyellow}{rgb}{0.99, 0.99, 0.59}
\definecolor{sandstorm}{rgb}{0.93, 0.84, 0.25}
\definecolor{skyblue}{rgb}{0.53, 0.81, 0.92}
\definecolor{royalblue}{rgb}{0.25, 0.41, 0.88}
\definecolor{celadon}{rgb}{0.67, 0.88, 0.69}
\definecolor{darkcyan}{rgb}{0.0, 0.55, 0.55}
\newcolumntype{L}[1]{>{\raggedright\let\newline\\\arraybackslash\hspace{0pt}}m{#1}}
\newcolumntype{C}[1]{>{\centering\let\newline\\\arraybackslash\hspace{0pt}}m{#1}}
\newcolumntype{R}[1]{>{\raggedleft\let\newline\\\arraybackslash\hspace{0pt}}m{#1}}
\title{Integer eigenvalues of the $n$-Queens graph}
\author[1,2]{Domingos M. Cardoso}
\author[1,2]{In\^es Ser\^odio Costa}
\author[1,2]{Rui Duarte}
\affil[1]{\small Centro de Investiga\c{c}\~{a}o e Desenvolvimento em Matem\'atica e Aplica\c{c}\~{o}es}
\affil[2]{\small Departamento de Matem\'atica, Universidade de  Aveiro, 3810-193, Aveiro, Portugal}
\begin{document}
\maketitle

\begin{abstract}
The $n$-Queens graph, $\mathcal{Q}(n)$, is the graph obtained from a $n\times n$ chessboard where each of its $n^2$ squares is a vertex and two vertices are adjacent if and only if they are in the same row, column or diagonal.
In a previous work the authors have shown that, for $n\ge4$, the least eigenvalue of $\mathcal{Q}(n)$ is $-4$ and its multiplicity is $(n-3)^2$. In this paper we prove that $n-4$ is also an eigenvalue of $\mathcal{Q}(n)$ and its multiplicity is at least $\frac{n+1}{2}$ or $\frac{n-2}{2}$ when $n$ is odd or even, respectively. Furthermore, when $n$ is odd, it is proved that $-3,-2\ldots,\frac{n-11}{2}$ and $\frac{n-5}{2},\ldots,n-5$ are additional integer eigenvalues of $\mathcal{Q}(n)$ and a family of eigenvectors associated with them is presented. Finally, conjectures about the multiplicity of the aforementioned eigenvalues and about the non-existence of any other integer eigenvalue are stated.
\end{abstract}

\medskip

\noindent \textbf{Keywords:} Queens graph, graph spectra, integer eigenvalues.

\medskip

\noindent \textbf{MSC 2020:} 05C50.

\medskip

\section{Introduction}\label{Sec-Introduction}
The $n$-Queens problem questions the possibility of placing $n$ non-attacking queens on a $n\times n$ chessboard. This problem has always solution for $n\ge4$ \cite{Pauls} and its solution is a maximum independent set of the $n$-Queens graph. Some historical notes about this problem are available in \cite{2009BellStevens} and \cite{1977Campbell}.

The $n$-Queens graph, $\mathcal{Q}(n)$, is a graph associated to the $n\times n$ chessboard, $\mathcal{T}_n$, it has $n^2$ vertices, each one corresponding to a square of $\mathcal{T}_n$, and two distinct vertices are adjacent if and only if the corresponding squares in $\mathcal{T}_n$ belong to the same row, column, or diagonal. This graph represents all legal moves of the queen on a chessboard. In Figure \ref{fig_Q4}, $\mathcal{Q}(4)$ is presented.

\begin{figure}[h!]
	\centering
	\begin{tikzpicture}
		\begin{scope}[every node/.style={circle,thick,draw,minimum size=0.8cm}]
			\node (1) at (0,0) {\tiny(1,1)};
			\node (2) at (1.5,0) {\tiny(1,2)};
			\node (3) at (3,0) {\tiny(1,3)};
			\node (4) at (4.5,0) {\tiny(1,4)};
			\node (5) at (0,-1.5) {\tiny(2,1)};
			\node (6) at (1.5,-1.5) {\tiny(2,2)};
			\node (7) at (3,-1.5) {\tiny(2,3)};
			\node (8) at (4.5,-1.5) {\tiny(2,4)};
			\node (9) at (0,-3) {\tiny(3,1)};
			\node (10) at (1.5,-3) {\tiny(3,2)};
			\node (11) at (3,-3) {\tiny(3,3)};
			\node (12) at (4.5,-3) {\tiny(3,4)};
			\node (13) at (0,-4.5) {\tiny(4,1)};
			\node (14) at (1.5,-4.5) {\tiny(4,2)};
			\node (15) at (3,-4.5) {\tiny(4,3)};
			\node (16) at (4.5,-4.5) {\tiny(4,4)};
		\end{scope}
		
		\begin{scope}[>={Stealth[black]},
			every edge/.style={draw=black,line width=0.5pt}]
			\path (1) edge node {} (2);
			\path (1) edge node {} (5);
			\path (1) edge node {} (6);
			\path (1) edge[bend left=25] node {} (3);
			\path (1) edge[bend left=25] node {} (4);
			\path (1) edge[bend right=25] node {} (9);
			\path (1) edge[bend right=25] node {} (13);
			\path (1) edge[bend left=18] node {} (11);
			\path (1) edge[bend left=18] node {} (16);
			
			\path (2) edge node {} (3);
			\path (2) edge node {} (5);
			\path (2) edge node {} (6);
			\path (2) edge node {} (7);
			\path (2) edge[bend left=25] node {} (4);
			\path (2) edge[bend left=25] node {} (12);
			\path (2) edge[bend right=25] node {} (10);
			\path (2) edge[bend right=25] node {} (14);
			
			\path (3) edge node {} (6);
			\path (3) edge node {} (7);
			\path (3) edge node {} (8);
			\path (3) edge node {} (4);
			\path (3) edge[bend right=25] node {} (9);
			\path (3) edge[bend left=25] node {} (11);
			\path (3) edge[bend left=25] node {} (15);
			
			\path (4) edge node {} (7);
			\path (4) edge node {} (8);
			\path (4) edge[bend left=18] node {} (10);
			\path (4) edge[bend left=18] node {} (13);
			\path (4) edge[bend left=25] node {} (12);
			\path (4) edge[bend left=25] node {} (16);
			
			\path (5) edge node {} (6);
			\path (5) edge node {} (9);
			\path (5) edge node {} (10);
			\path (5) edge[bend right=25] node {} (13);
			\path (5) edge[bend right=25] node {} (15);
			\path (5) edge[bend left=25] node {} (7);
			\path (5) edge[bend left=25] node {} (8);
			
			\path (6) edge node {} (7);
			\path (6) edge node {} (9);
			\path (6) edge node {} (10);
			\path (6) edge node {} (11);
			\path (6) edge[bend right=25] node {} (14);
			\path (6) edge[bend left=25] node {} (8);
			\path (6) edge[bend left=18] node {} (16);
			
			\path (7) edge node {} (11);
			\path (7) edge node {} (8);
			\path (7) edge node {} (10);
			\path (7) edge node {} (12);
			\path (7) edge[bend left=18] node {} (13);
			\path (7) edge[bend left=25] node {} (15);
			
			\path (8) edge node {} (11);
			\path (8) edge node {} (12);
			\path (8) edge[bend left=25] node {} (14);
			\path (8) edge[bend left=25] node {} (16);
			
			\path (9) edge node {} (10);
			\path (9) edge node {} (13);
			\path (9) edge node {} (14);
			\path (9) edge[bend right=25] node {} (11);
			\path (9) edge[bend right=25] node {} (12);
			
			\path (10) edge node {} (11);
			\path (10) edge node {} (13);
			\path (10) edge node {} (14);
			\path (10) edge node {} (15);
			\path (10) edge[bend right=25] node {} (12);
			
			\path (11) edge node {} (12);
			\path (11) edge node {} (14);
			\path (11) edge node {} (15);
			\path (11) edge node {} (16);
			
			\path (12) edge node {} (15);
			\path (12) edge node {} (16);
			
			\path (13) edge node {} (14);
			\path (13) edge[bend right=25] node {} (15);
			\path (13) edge[bend right=25] node {} (16);
			\path (14) edge node {} (15);
			\path (14) edge[bend right=25] node {} (16);
			\path (15) edge node {} (16);			
		\end{scope}
	\end{tikzpicture}
	\caption{$\mathcal{Q}(4)$.}\label{fig_Q4}
\end{figure}

Throughout the text $A$ denotes the adjacency matrix of the graph $\mathcal{Q}(n)$ and its eigenvalues are also called the eigenvalues of the graph. The eigenspace associated with an eigenvalue $\mu$ of $\mathcal{Q}(n)$ is denoted by $\mathcal{E}_{\mathcal{Q}(n)} (\mu)$.

The least eigenvalue of the $n$-Queens graph and its multiplicity are already known \cite{2022CardosoCostaDuarte} and such results are recalled in Section~\ref{Sec-LeastEigenvalue}. In Section~\ref{Sec-SequenceEigenvalues} it is proved that $-3, -2, \ldots, \frac{n-11}{2}, \frac{n-5}{2},$ $ \ldots, n-5, n-4$ are eigenvalues of $\mathcal{Q}(n)$, when $n \ge 4$
is odd and a family of eigenvectors associated with them is introduced. In Section~\ref{Sec-n-4}, it is proved that $n-4$ is also an eigenvalue of $\mathcal{Q}(n)$ when $n \ge 3$. Additionally, a family of $\frac{n+1}{2}$ (resp. $\frac{n-2}{2}$) linearly independent eigenvectors associated to $n-4$ is presented when $n$ is odd (resp. even). We finish the paper with some conjectures regarding the multiplicities of the eigenvalues, $-3,-2,\ldots,\frac{n-11}{2},\frac{n-5}{2},\ldots,n-5,n-4$, and the non-existence of any other integer eigenvalues.\\

The rows and columns of $\mathcal{T}_n$ are labeled from top to the bottom and from left to right, respectively, and we use the ordered pair $(i,j)$ to refer to the vertex of $\mathcal{Q}(n)$, which is in the $i$-th row and $j$-th column.
For an easier representation of the vectors, in this paper, they will be written over the chessboard. Thus, for a vector $v$, the coordinate corresponding to the vertex $(i,j)$ will be displayed at the square $(i,j)$ of the chessboard and it will be denoted by $v_{(i,j)}$.

\section{Least eigenvalue of $\mathcal{Q}(n)$} \label{Sec-LeastEigenvalue}

In \cite{2022CardosoCostaDuarte}, using the combinatorial definition of \emph{Edge Clique Partition} (ECP) of a graph $G$, it was proved that the symmetric of the maximum clique degree of an ECP relative to $G$ is a lower bound for the eigenvalues of a graph.
In particular, for the $n$-Queens graph, with $n \ge 4$, the ECP with maximal cliques in all the four ``edge directions'' was considered and the results of this section were obtained.

\medskip

Consider the family of vectors
$$
\mathbf{V}_n = \{X_n^{(a,b)} \in \mathbb{R}^{n^2} \mid (a,b) \in [n-3]^2 \},
$$
with $n \ge 4$, where $X_n^{(a,b)}$ is the vector defined by

\begin{equation}\label{ev_components}
	\big[X_n^{(a,b)}\big]_{(i,j)} = \begin{cases}
            \big[X_4\big]_{(i-a+1,j-b+1)}, & \text{ if } a \leq i \leq a+3 \text{ and } b \leq j \leq b+3;\\
		0,                             & \text{otherwise,}
	\end{cases}
\end{equation}
with $X_4$ is the vector presented in Figure~\ref{vector_X4}.

\begin{figure}[h!]
	\[
	\begin{array}{|c|c|c|c|} \hline
		0          & \textbf{1} & \textbf{-1}& 0           \\ \hline
		\textbf{-1}& 0          & 0          & \textbf{1}  \\ \hline
		\textbf{1} & 0          & 0          & \textbf{-1} \\ \hline
		0          &\textbf{-1} & \textbf{1} & 0           \\ \hline
	\end{array}
	\]
	\caption{The vector $X_4$.}\label{vector_X4}
\end{figure}

This family $\mathbf{V}_n$ is formed by $(n-3)^2$ linearly independent vectors. The four vectors of $\mathbf{V}_5$ are depicted in Figure~\ref{vetors_11-12-21-22}.

\begin{figure}[h!]\centering
	$\begin{array}{cc}
 \overbrace{
		\begin{array}{|c|c|c|c|c|} \hline
			0          & \textbf{1} & \textbf{-1}& 0          & 0 \\ \hline
			\textbf{-1}& 0          & 0          & \textbf{1} & 0 \\ \hline
			\textbf{1} & 0          & 0          & \textbf{-1}& 0 \\ \hline
			0          &\textbf{-1} & \textbf{1} & 0          & 0 \\ \hline
			0          & 0          & 0          & 0          & 0 \\ \hline
		\end{array}
  }^{X_5^{(1,1)}}
		&
  \overbrace{
		\begin{array}{|c|c|c|c|c|} \hline
			0 & 0          & \textbf{1} & \textbf{-1}& 0 \\ \hline
			0 & \textbf{-1}& 0          & 0          & \textbf{1} \\ \hline
			0 & \textbf{1} & 0          & 0          & \textbf{-1} \\ \hline
			0 & 0          & \textbf{-1}& \textbf{1} & 0 \\ \hline
			0 & 0          & 0          & 0          & 0 \\ \hline
		\end{array}}^{X_5^{(1,2)}}
		\\ \\
 \underbrace{
		\begin{array}{|c|c|c|c|c|}\hline
			0          & 0          & 0          & 0          & 0 \\ \hline
			0          & \textbf{1} & \textbf{-1}& 0          & 0\\ \hline
			\textbf{-1}& 0          & 0          & \textbf{1} & 0 \\ \hline
			\textbf{1} & 0          & 0          & \textbf{-1}& 0 \\ \hline
			0          & \textbf{-1}& \textbf{1} & 0          & 0 \\ \hline
		\end{array}
  }_{X_5^{(2,1)}}
  &
  \underbrace{
		\begin{array}{|c|c|c|c|c|}\hline
			0 & 0          & 0          & 0          & 0  \\ \hline
			0 & 0          & \textbf{1} & \textbf{-1}& 0 \\ \hline
			0 & \textbf{-1}& 0          & 0          & \textbf{1} \\ \hline
			0 & \textbf{1} & 0          & 0          & \textbf{-1} \\ \hline
			0 & 0          & \textbf{-1}& \textbf{1} & 0 \\ \hline
		\end{array}
  }_{X_5^{(2,2)}}
	\end{array}
 $
	\caption{The vectors defined in \eqref{ev_components} for $n=5$.} \label{vetors_11-12-21-22}
\end{figure}

\medskip

Once these vectors are known, the following result appears.

\begin{theorem}\cite[Theorem 4.3.]{2022CardosoCostaDuarte} \label{th-4eigenvalue}
	$-4$ is an eigenvalue of $\mathcal{Q}(n)$ with multiplicity $(n-3)^2$ and $\mathbf{V}_n$ is a basis for
	$\mathcal{E}_{\mathcal{Q}(n)} (-4)$.
\end{theorem}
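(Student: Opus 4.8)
The plan is to make the ECP lower bound explicit, read off from it a clean description of $\mathcal{E}_{\mathcal{Q}(n)}(-4)$ as a space of ``zero-line-sum'' vectors, check that $\mathbf{V}_n$ lands in that space, and then pin down its dimension by a rank computation. First I would set up the edge clique partition whose cliques are the maximal lines (the $n$ rows, $n$ columns, and all diagonals of length at least $2$). Writing $\mathbf{1}_C$ for the characteristic vector of a clique $C$ and $d^{*}_v$ for the clique degree of $v$, one has $A+D^{*}=\sum_C \mathbf{1}_C\mathbf{1}_C^{\top}$ with $D^{*}=\mathrm{diag}(d^{*}_v)$; every square lies on its row, its column and two diagonals, so $d^{*}_v=4$ except at the four corners, where one diagonal is a single square and $d^{*}_v=3$. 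Hence $A+4I=\sum_C\mathbf{1}_C\mathbf{1}_C^{\top}+(4I-D^{*})\succeq 0$, which re-proves that $-4$ is a lower bound, and for every $x$
\[
x^{\top}(A+4I)x=\sum_{C}\big(\mathbf{1}_C^{\top}x\big)^{2}+\sum_{v}(4-d^{*}_v)\,x_v^{2}.
\]
Since $A+4I\succeq0$, its kernel is exactly where this form vanishes, so
\[
\mathcal{E}_{\mathcal{Q}(n)}(-4)=\{\,x\in\mathbb{R}^{n^{2}}:\text{every row, column and diagonal of }x\text{ sums to }0\,\},
\]
the corner conditions being absorbed into the length-$1$ diagonals.

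Next I would verify $\mathbf{V}_n\subseteq\mathcal{E}_{\mathcal{Q}(n)}(-4)$. By the description above it suffices that each $X_n^{(a,b)}$ has all line sums zero; as its support sits inside a $4\times4$ block carrying the pattern $X_4$, every line of the board either is disjoint from the block or meets it in a full row, column, or diagonal of $X_4$. So the claim reduces to the finite check that in $X_4$ every row, every column and every diagonal (of both kinds) sums to $0$, which is immediate from Figure~\ref{vector_X4}, together with the fact that the four corner entries of $X_4$ are zero so no corner constraint is violated after embedding. Combined with the linear independence already recorded for $\mathbf{V}_n$ (the entry $+1$ at square $(a,b+1)$ is a pivot, since no index lexicographically larger than $(a,b)$ is supported there), this yields $\dim\mathcal{E}_{\mathcal{Q}(n)}(-4)\ge(n-3)^{2}$.

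The whole weight then falls on the reverse inequality $\dim\mathcal{E}_{\mathcal{Q}(n)}(-4)\le(n-3)^{2}$. Because the eigenspace is the common kernel of the $6n-2$ line-sum functionals, this is equivalent to showing that those functionals span a space of dimension exactly $6n-9$, i.e. that the space of linear dependencies among them is exactly $7$-dimensional. I would first exhibit seven independent dependencies: the four ``grand total'' functionals coincide (this gives $3$ relations); weighting each diagonal by its index expresses $\sum_c c\cdot(\text{descending diagonal }c)$ and $\sum_s s\cdot(\text{ascending diagonal }s)$ through the first moments $\sum_i i\cdot(\text{row }i)$ and $\sum_j j\cdot(\text{column }j)$ ($2$ relations); weighting by the squared index gives one relation, via $(j-i)^{2}+(i+j)^{2}=2i^{2}+2j^{2}$; and the checkerboard weighting gives the single relation $\sum_c(-1)^{c}\cdot(\text{descending diagonal }c)=\sum_s(-1)^{s}\cdot(\text{ascending diagonal }s)$, since $(-1)^{j-i}=(-1)^{i+j}$.

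The hard part will be proving that there are no further dependencies, i.e. solving the functional equation $\rho_i+\gamma_j+\delta_{j-i}+\epsilon_{i+j}=0$ for all $(i,j)$ and showing its solution space is exactly $7$-dimensional. Taking the mixed second difference in $i$ and $j$ annihilates $\rho$ and $\gamma$ and forces the second difference of $\delta$ at $j-i$ to equal that of $\epsilon$ at $i+j$ on every interior square; over a connected range of squares this makes $\delta$ and $\epsilon$ quadratic along each parity class of diagonals. The delicate point — and the reason the count is $7$ rather than the $6$ one would naively read off from the polynomial weights of degrees $0,1,2$ — is precisely this splitting of the diagonals into even and odd classes (the source of the checkerboard relation), which has to be controlled together with the unconstrained values on the two extreme diagonals and with the boundary of the board. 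Once the solution space is confirmed to be $7$-dimensional, $\dim\mathcal{E}_{\mathcal{Q}(n)}(-4)=n^{2}-(6n-9)=(n-3)^{2}$; together with the lower bound and the membership $\mathbf{V}_n\subseteq\mathcal{E}_{\mathcal{Q}(n)}(-4)$ this shows $-4$ is an eigenvalue of multiplicity $(n-3)^{2}$ and that $\mathbf{V}_n$ is a basis of its eigenspace.
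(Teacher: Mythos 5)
The paper does not actually prove this theorem: it is quoted from \cite{2022CardosoCostaDuarte}, and Section~\ref{Sec-LeastEigenvalue} only records that the proof there rests on the edge clique partition of $\mathcal{Q}(n)$ into rows, columns and diagonals. Your opening move is exactly that machinery: $\sum_C \mathbf{1}_C\mathbf{1}_C^{\top}=A+D^{*}$, the observation that $d^{*}_v=4$ off the four corners and $3$ at them, hence $A+4I\succeq 0$ with kernel equal to the space of vectors all of whose row, column and diagonal sums vanish (the corner conditions being the length-one diagonals). That part, together with the finite check that $X_4$ has all line sums zero (so each $X_n^{(a,b)}$ does too) and the leading-entry argument for the linear independence of $\mathbf{V}_n$, is correct and complete; it yields $-4\in\sigma(\mathcal{Q}(n))$ and $\dim\mathcal{E}_{\mathcal{Q}(n)}(-4)\ge(n-3)^2$.

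The gap is the reverse inequality, which is the actual content of ``multiplicity $(n-3)^2$'' and of ``$\mathbf{V}_n$ is a basis.'' You correctly reduce it to showing that the solution space of $\rho_i+\gamma_j+\delta_{j-i}+\epsilon_{i+j}=0$ on $[n]^2$ is exactly $7$-dimensional, and your seven explicit dependencies (three grand totals, two first moments, the second moment via $(j-i)^2+(i+j)^2=2i^2+2j^2$, and the checkerboard relation) are indeed independent. But the matching upper bound on the dependency space is only announced, not proved: ``the hard part will be proving that there are no further dependencies'' is a statement of intent. The mixed-second-difference idea is the right tool, yet as you yourself note it only relates $\Delta^2\delta$ and $\Delta^2\epsilon$ at interior indices of matching parity; one still has to exclude, for instance, $\delta$ being an independent quadratic on each parity class (which would give too many parameters), and to control the values on the extreme diagonals $j-i=\pm(n-1)$ and $i+j\in\{2,2n\}$, each of which appears in only one equation of the system. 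Until that count is actually carried out, your argument establishes only the lower bound on the multiplicity, not the theorem as stated.
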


From now on, the zero entries of eigenvectors are omitted.

\section{A sequence of integer eigenvalues} \label{Sec-SequenceEigenvalues}
In this section, we assume that $n\ge4$ is odd and $\lambda \in \mathbb{Z}$ is such that $1 \leq \lambda+4 \leq n$. Given $i,j \in [n]$, let $i \ominus j = |i-j|$ and $i \oplus j = |i+j - (n+1)|$. Let $k = (\lambda+4) - \frac{n-1}{2}$ and $P_{n,\lambda}, Q_{n,\lambda} \in \mathbb{R}^{n \times n}$ the vectors defined as follows.

\begin{align*}
	(P_{n,\lambda})_{(i,j)} & =
	\begin{cases}
		k, & \text{if $i \ominus j = n - (\lambda+4)$;} \\
		1, & \text{if $i \ominus j < n - (\lambda+4)$,} \\
		& i \oplus j < \lambda + 4, \\
		& \text{and $i \ominus j \equiv n - (\lambda+4) \mod{2}$}; \\
		0, & \text{otherwise};
	\end{cases}
	\intertext{and}
	(Q_{n,\lambda})_{(i,j)} & =
	\begin{cases}
		-k, & \text{if $i \oplus j = n - (\lambda+4)$;} \\
		-1, & \text{if $i \oplus j < n - (\lambda+4)$,} \\
		& i \ominus j < \lambda + 4, \\
		& \text{and $i \oplus j \equiv n - (\lambda+4) \mod{2}$}; \\
		0, & \text{otherwise};
	\end{cases} \\
	& = - (P_{n,\lambda})_{(n+1-i,j)} = - (P_{n,\lambda})_{(i,n+1-j)} .
\end{align*}

In Figure \ref{fig:PeQ} we display $P_{11,-1}$ and $Q_{11,-1}$. Note that if $n=11$ and $\lambda=-1$, $k=-2$.

\begin{figure}[h!]
	\centering
	
	{\scriptsize
			\begin{tabular}{|c|c|c|c|c|c|c|c|c|c|c|}
				\hline & & & & & \phantom{-1} & & & \cellcolor{royalblue}-2 & & \\
				\hline & & & & & & & \cellcolor{skyblue}1 & & \cellcolor{royalblue}-2 & \\
				\hline & & & & & & \cellcolor{skyblue}1 & & \cellcolor{skyblue}1 & & \cellcolor{royalblue}-2 \\
				\hline & & & & & \cellcolor{skyblue}1 & & \cellcolor{skyblue}1 & & \cellcolor{skyblue}1 & \\
				\hline & & & & \cellcolor{skyblue}1 & & \cellcolor{skyblue}1 & & \cellcolor{skyblue}1 & & \\
				\hline & & & \cellcolor{skyblue}1 & & \cellcolor{skyblue}1 & & \cellcolor{skyblue}1 & & & \\
				\hline & & \cellcolor{skyblue}1 & & \cellcolor{skyblue}1 & & \cellcolor{skyblue}1 & & & & \\
				\hline & \cellcolor{skyblue}1 & & \cellcolor{skyblue}1 & & \cellcolor{skyblue}1 & & & & & \\
				\hline \cellcolor{royalblue}-2 & & \cellcolor{skyblue}1 & & \cellcolor{skyblue}1 & & & & & & \\
				\hline & \cellcolor{royalblue}-2 & & \cellcolor{skyblue}1 & & & & & & & \\
				\hline & & \cellcolor{royalblue}-2 & & & & & & & & \\
				\hline
			\end{tabular}\\\vspace{0.5cm}
			\begin{tabular}{|c|c|c|c|c|c|c|c|c|c|c|}
				\hline & & \cellcolor{sandstorm}2 & & & \phantom{-1} & & & & & \\
				\hline & \cellcolor{sandstorm}2 & & \cellcolor{pastelyellow}-1 & & & & & & & \\
				\hline \cellcolor{sandstorm}2 & & \cellcolor{pastelyellow}-1 & & \cellcolor{pastelyellow}-1 & & & & & & \\
				\hline & \cellcolor{pastelyellow}-1 & & \cellcolor{pastelyellow}-1 & & \cellcolor{pastelyellow}-1 & & & & & \\
				\hline & & \cellcolor{pastelyellow}-1 & & \cellcolor{pastelyellow}-1 & & \cellcolor{pastelyellow}-1 & & & & \\
				\hline & & & \cellcolor{pastelyellow}-1 & & \cellcolor{pastelyellow}-1 & & \cellcolor{pastelyellow}-1 & & & \\
				\hline & & & & \cellcolor{pastelyellow}-1 & & \cellcolor{pastelyellow}-1 & & \cellcolor{pastelyellow}-1 & & \\
				\hline & & & & & \cellcolor{pastelyellow}-1 & & \cellcolor{pastelyellow}-1 & & \cellcolor{pastelyellow}-1 & \\
				\hline & & & & & & \cellcolor{pastelyellow}-1 & & \cellcolor{pastelyellow}-1 & & \cellcolor{sandstorm}2 \\
				\hline & & & & & & & \cellcolor{pastelyellow}-1 & & \cellcolor{sandstorm}2 & \\
				\hline & & & & & & & & \cellcolor{sandstorm}2 & & \\
				\hline
			\end{tabular}
	}
	\caption{The vectors $P_{11,-1}$ (table on the top) and $Q_{11,-1}$ (table on the bottom).}
	\label{fig:PeQ}
\end{figure}

\medskip

Now, let us consider the family of vectors
\begin{equation}\label{family_vectors}
E_{n,\lambda} = P_{n,\lambda} + Q_{n,\lambda}.
\end{equation}

\begin{theorem}
	$A E_{n,\lambda} = \lambda E_{n,\lambda}$.
\end{theorem}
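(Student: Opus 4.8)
The plan is to check the identity square by square on the board. For any board-vector $v$, the value $(Av)_{(i,j)}$ is the total of $v$ over all squares a queen at $(i,j)$ attacks; since the row, the column, the difference-diagonal $\{i'-j'=i-j\}$ and the sum-diagonal $\{i'+j'=i+j\}$ through $(i,j)$ pairwise meet only at $(i,j)$, this total equals the sum of the four corresponding line-sums minus $4v_{(i,j)}$. Writing $\rho_i,\gamma_j$ for the row and column sums, and $\delta_{i-j}(v)=\sum_{i'-j'=i-j}v_{(i',j')}$, $\sigma_{i+j}(v)=\sum_{i'+j'=i+j}v_{(i',j')}$ for the two diagonal sums, the assertion $AE_{n,\lambda}=\lambda E_{n,\lambda}$ is equivalent to $\rho_i(E_{n,\lambda})+\gamma_j(E_{n,\lambda})+\delta_{i-j}(E_{n,\lambda})+\sigma_{i+j}(E_{n,\lambda})=(\lambda+4)(E_{n,\lambda})_{(i,j)}$ at every square.

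First I would dispose of the row and column sums. The two reflection identities recorded in the definition of $Q_{n,\lambda}$ give $(E_{n,\lambda})_{(i,j)}=(P_{n,\lambda})_{(i,j)}-(P_{n,\lambda})_{(n+1-i,j)}=(P_{n,\lambda})_{(i,j)}-(P_{n,\lambda})_{(i,n+1-j)}$; summing the first form over $i$ makes every column sum vanish, and summing the second over $j$ makes every row sum vanish. It therefore remains to prove $\delta_{i-j}(E_{n,\lambda})+\sigma_{i+j}(E_{n,\lambda})=(\lambda+4)(E_{n,\lambda})_{(i,j)}$.

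Next I would compute the diagonal sums of $P_{n,\lambda}$ outright. Set $m=n-(\lambda+4)$. A short count shows that $\delta_c(P_{n,\lambda})$ equals $(\lambda+4)k$ for $|c|=m$, equals $\lambda+4$ for $|c|<m$ with $c\equiv m\pmod 2$, and is $0$ otherwise, while $\sigma_s(P_{n,\lambda})$ equals $\lambda+4$ when $|s-(n+1)|<\lambda+4$ with $s\equiv m\pmod 2$ and is $0$ otherwise. The crucial point is the sum-diagonal meeting the two value-$k$ squares: it carries two $k$'s and $m-1$ ones, and the prescribed value $k=(\lambda+4)-\frac{n-1}{2}$ is exactly what makes $2k+(m-1)=\lambda+4$, so those diagonals carry the same total as the all-ones ones. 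Because the vertical reflection $(i,j)\mapsto(i,n+1-j)$ interchanges difference- and sum-diagonals, the relation $(Q_{n,\lambda})_{(i,j)}=-(P_{n,\lambda})_{(i,n+1-j)}$ yields $\delta_c(E_{n,\lambda})=\delta_c(P_{n,\lambda})-\sigma_{c+n+1}(P_{n,\lambda})$ and $\sigma_s(E_{n,\lambda})=\sigma_s(P_{n,\lambda})-\delta_{s-(n+1)}(P_{n,\lambda})$.

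Substituting these and regrouping, and noting that $c+n+1$ and $s-(n+1)$ are precisely the sum- and difference-coordinates of the mirror square $(i,n+1-j)$, the target collapses to $G(i,j)=G(i,n+1-j)$, where $G(i,j):=\delta_{i-j}(P_{n,\lambda})+\sigma_{i+j}(P_{n,\lambda})-(\lambda+4)(P_{n,\lambda})_{(i,j)}$. I would finish by proving that $G$ is constant on each parity class of squares — equal to $\lambda+4$ when $i-j\equiv m\pmod 2$ and to $0$ otherwise — from which the required invariance is immediate, since the reflection preserves the parity of $i-j$. On the odd class all three terms vanish; on the even class one splits into the branches $i\ominus j>m$, $i\ominus j=m$, and $i\ominus j<m$ and reads off the formulas above, the middle branch having its two $k$-terms cancel so that only the contribution $\lambda+4$ survives. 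The single geometric fact that makes every branch land on $\lambda+4$, and the step I expect to be the delicate one, is the inequality $i\ominus j+i\oplus j=2\max\!\big(|i-\tfrac{n+1}{2}|,|j-\tfrac{n+1}{2}|\big)\le n-1$: it guarantees $i\ominus j\ge m\Rightarrow i\oplus j<\lambda+4$, so the sum-diagonal sum equals $\lambda+4$ exactly on the squares where the difference-diagonal sum has dropped to $0$ or jumped to $(\lambda+4)k$.
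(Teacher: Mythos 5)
Your overall strategy coincides with the paper's: both pass to $(A+4I)E_{n,\lambda}=(\lambda+4)E_{n,\lambda}$, kill the row and column sums via the reflection identities $Q_{n,\lambda}=-(P_{n,\lambda})_{(n+1-i,j)}=-(P_{n,\lambda})_{(i,n+1-j)}$, and then reduce everything to the diagonal and antidiagonal sums of $P_{n,\lambda}$ and $Q_{n,\lambda}$. Where you genuinely diverge is in the endgame. The paper finishes by brute force: it tabulates the five quantities $\alpha,\beta,\gamma,\delta,\epsilon$ over all positions of $(p\ominus q,p\oplus q)$ relative to $\lambda+4$ and $n-(\lambda+4)$, split further into the subcases $n>2(\lambda+4)$ and $n<2(\lambda+4)$ (Tables~1 and~2). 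You instead convert the antidiagonal sums of $Q$ into diagonal sums of $P$ at the mirror square, collapse the whole identity to the reflection-invariance $G(i,j)=G(i,n+1-j)$ for $G=\delta_{i-j}(P)+\sigma_{i+j}(P)-(\lambda+4)P_{(i,j)}$, and prove the stronger fact that $G$ is constant on each parity class. That is a cleaner and more illuminating organization: it replaces two large tables by three short branches, and it isolates the one nontrivial geometric input, the inequality $i\ominus j+i\oplus j\le n-1$, which is exactly what makes the branches mesh. Your verification of that inequality and of the identity $2k+(m-1)=\lambda+4$ is correct.

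There is, however, one concrete error: your closed formula for $\sigma_s(P_{n,\lambda})$ fails in the degenerate case $\lambda+4=n$, i.e.\ $m=0$, which is inside the theorem's hypothesis $1\le\lambda+4\le n$ and is precisely the case the paper separates out as Case~1. When $m=0$ the two value-$k$ squares on an antidiagonal coincide (the antidiagonal meets the main diagonal once, not twice), so the antidiagonal sum is $k=\frac{n+1}{2}$ rather than $2k+(m-1)=\lambda+4=n$; likewise there are no squares with $i\ominus j<m$, so the ``$m-1$ ones'' are absent. Your final mechanism actually survives this: one checks that $G$ is still constant on each parity class (equal to $k$ on the even class and $0$ on the odd class), so the reflection-invariance, and hence the eigenvector identity, still follows. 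But as written your count ``two $k$'s and $m-1$ ones'' and the claimed value $\lambda+4$ are wrong for $\lambda=n-4$, so you need to add this case explicitly (or note that constancy of $G$ on parity classes is all you use, and verify it separately when $m=0$).
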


\begin{proof}
	We prove that $(A + 4 I_n) E_{n,\lambda} = (\lambda + 4) E_{n,\lambda}$.
	
	First note that for every $\ell \in [n]$,
	\[
	\sum_{j=1}^n (P_{n,\lambda})_{(\ell,j)} = - \sum_{j=1}^n (Q_{n,\lambda})_{(\ell,j)} \quad \text{and} \quad \sum_{i=1}^n (P_{n,\lambda})_{(i,\ell)} = - \sum_{i=1}^n (Q_{n,\lambda})_{(i,\ell)},
	\]
	so
	\[
	\sum_{j=1}^n (E_{n,\lambda})_{(\ell,j)} = 0 = \sum_{i=1}^n (E_{n,\lambda})_{(i,\ell)}.
	\]
	
	Second note that
		\begin{align*}
            ((A + 4 I_n) E_{n,\lambda})_{(p,q)} = & \sum \limits_{(i,j) \sim (p,q)} (E_{n,\lambda})_{(i,j)} + 4 (E_{n,\lambda})_{(p,q)}\\
			= & \sum_{j=1}^n (E_{n,\lambda})_{(p,j)} + \sum_{i=1}^n (E_{n,\lambda})_{(i,q)} + \\
			& + \sum_{i+j=p+q}(E_{n,\lambda})_{(i,j)} + \sum_{i-j=p-q} (E_{n,\lambda})_{(i,j)}\\
			= & \sum_{i+j=p+q} (P_{n,\lambda})_{(i,j)} + \sum_{i-j=p-q} (P_{n,\lambda})_{(i,j)} + \\
			& + \sum_{i+j=p+q} (Q_{n,\lambda})_{(i,j)} + \sum_{i-j=p-q} (Q_{n,\lambda})_{(i,j)}.
		\end{align*}

For convenience, the parameters
	 	\begin{eqnarray*}
	 		\alpha&=(\lambda + 4) (E_{n,\lambda})_{(p,q)}, \quad \beta&= \sum \limits_{i+j=p+q} (P_{n,\lambda})_{(i,j)},\\
	 		\gamma&=\sum \limits_{i-j=p-q} (P_{n,\lambda})_{(i,j)}, \quad \delta&=\sum \limits_{i+j=p+q} (Q_{n,\lambda})_{(i,j)} \text{ and}\\
	 		\epsilon&= \sum \limits_{i-j=p-q} (Q_{n,\lambda})_{(i,j)}, \quad &
	 	\end{eqnarray*}
will be used in Table~\ref{parameters_table} and Table~\ref{tbl4}.\\

	\noindent \textbf{Case 1:} $\lambda+4 = n$. \\
	For every $\ell \in \{ 0 \} \cup [n]$,
	\begin{align*}
		\sum_{i \oplus j = \ell} (P_{n,\lambda})_{(i,j)} & =
		\begin{cases}
			k & \text{if $\ell$ is even}, \\
			0 & \text{otherwise,}
		\end{cases} = - \sum_{i \ominus j = \ell} (Q_{n,\lambda})_{(i,j)}
		\intertext{and}
		\sum_{i \ominus j = \ell} (P_{n,\lambda})_{(i,j)} & =
		\begin{cases}
			kn & \text{if $\ell = 0$,} \\
			0 & \text{otherwise}
		\end{cases} = - \sum_{i \oplus j = \ell} (Q_{n,\lambda})_{(i,j)} .
	\end{align*}

\begin{table}[h!]
    \centering
		\[
		{
			\begin{array}{|c|c|c|c|c|c|c|c|}
			
				\hline & p \ominus q & p \oplus q & \alpha & \beta & \gamma & \delta & \epsilon \\
				\hline
				\hline \multirow{4}{*}{$p+q$ even} & = 0 & = 0 & 0 & k & nk & -nk & -k \\
				& = 0 & > 0 & nk & k & nk & 0 & -k \\
				& > 0 & = 0 & n(-k) & k & 0 & -nk & -k \\
				& > 0 & > 0 & 0 & k & 0 & 0 & -k \\
				\hline \text{$p+q$ odd} & \neq 0 & \neq 0 & 0 & 0 & 0 & 0 & 0 \\
				\hline
			\end{array}
		}
		\]
    \caption{All possible values of $\alpha,\beta, \gamma, \delta$ and $\epsilon$, when $\lambda+4 = n$. }\label{parameters_table}
\end{table}

		\medskip
	
	\noindent \textbf{Case 2:} $\lambda+4 < n$. \\
	For every $\ell \in \{ 0 \} \cup [n]$,

	\begin{align*}
		\sum_{i \oplus j = \ell} (P_{n,\lambda})_{(i,j)} & =
		(\lambda+4) \cdot
		\begin{cases}
			1, & \text{if $\ell < \lambda + 4$ and} \\
			& \ell \not\equiv \lambda + 4 \bmod{2}; \\
			0 & \text{otherwise};
		\end{cases}
		\ = - \sum_{i \ominus j = \ell} (Q_{n,\lambda})_{(i,j)}
		\intertext{and}
		\sum_{i \ominus j = \ell} (P_{n,\lambda})_{(i,j)} & =
		(\lambda+4) \cdot
		\begin{cases}
			k, & \text{if $\ell = n - (\lambda+4)$}; \\
			1, & \text{if $\ell < n - (\lambda+4)$ and} \\
			& \ell \not\equiv \lambda + 4 \bmod{2}; \\
			0, & \text{otherwise};
		\end{cases}
		\ = - \sum_{i \oplus j = \ell} (Q_{n,\lambda})_{(i,j)}.
	\end{align*}
	
	Taking into account Table~\ref{tbl4}, where it is considered the subcases $n>2(\lambda+4)$ and $n<2(\lambda+4$), which correspond to $\lambda$ being $-3,\ldots,\frac{n-9}{2}$ and $\frac{n-7}{2},\ldots,-5$, respectively, it follows that $E_{n,\lambda}$ is an eigenvector of $\mathcal{Q}(n)$ associated with $\lambda$.
\end{proof}
	
	\begin{table}[h!]
		\[    \!\!\!\!\!\!\!\!\!\!\!\!\!\!\!\!\!\!\!\!\!\!\!\!\!\!\!\!\!\!\!\!\!\!\!\!
		{\footnotesize
			{\renewcommand{\arraystretch}{1.22}
				\begin{array}{|c|c|c|c|c|c|c|c|}
					\hline\multicolumn{8}{|c|}{\textbf{Subcase 2.1: } n > 2 (\lambda + 4)}\\
					\hline & \underbrace{p \ominus q}_{(a)} & \underbrace{p \oplus q}_{(b)} & \alpha & \beta & \gamma & \delta & \epsilon \\
					\hline
					\hline \parbox[t]{2mm}{\multirow{16}{*}{\rotatebox[origin=c]{90}{$p+q \not\equiv \lambda+4 \bmod{2}$}}} & \multirow{4}{*}{$<\lambda+4$} & <\lambda+4 & (\lambda + 4)(1-1) & \lambda+4 & \lambda+4 & (\lambda+4)(-1) & (\lambda+4)(-1) \\
					&  & \lambda+4 \le (b) < n-(\lambda+4) & (\lambda + 4)(0-1) & 0 & \lambda+4 & (\lambda+4)(-1) & (\lambda+4)(-1) \\
					&  & = n-(\lambda+4) & (\lambda + 4)(0-k) & 0 & \lambda+4 & (\lambda+4)(-k) & (\lambda+4)(-1) \\
					&  & > n-(\lambda+4) & (\lambda + 4)(0-0) & 0 & \lambda+4 & 0 & (\lambda+4)(-1) \\
					\cline{2-8}
					& \multirow{4}{*}{$\lambda+4 \le (a) < n-(\lambda+4)$} & <\lambda+4 & (\lambda + 4)(1-0) & \lambda+4 & \lambda+4 & (\lambda+4)(-1) & 0 \\
					&  & \lambda+4 \le (b) < n-(\lambda+4) & (\lambda + 4)(0-0) & 0 & \lambda+4 & (\lambda+4)(-1) & 0 \\
					&  & = n-(\lambda+4) & --- & --- & --- & --- & --- \\
					&  & > n-(\lambda+4) & --- & --- & --- & --- & --- \\
					\cline{2-8}
					& \multirow{4}{*}{$= n-(\lambda+4)$} & <\lambda+4 & (\lambda + 4)(k-0) & \lambda+4 & (\lambda+4)k & (\lambda+4)(-1) & 0 \\
					&  & = n-(\lambda+4) & --- & --- & --- & --- & --- \\
					&  & \lambda+4 \le (b) < n-(\lambda+4) & --- & --- & --- & --- & --- \\
					&  & > n-(\lambda+4) & --- & --- & --- & --- & --- \\
					\cline{2-8}
					& \multirow{4}{*}{$> n-(\lambda+4)$} & < \lambda+4 & (\lambda + 4)(0-0) & \lambda+4 & 0 & (\lambda+4)(-1) & 0 \\
					&  & \lambda+4 \le (b) < n-(\lambda+4) & --- & --- & --- & --- & --- \\
					&  & = n-(\lambda+4) & --- & --- & --- & --- & --- \\
					&  & > n-(\lambda+4) & --- & --- & --- & --- & --- \\\hline
					\parbox[t]{2mm}{\rotatebox[origin=c]{90}{\text{\scriptsize { } otherwise { }}}} & \multicolumn{2}{c|}{\text{ any case }} & 0 & 0 & 0 & 0 & 0 \\
					\hline
				\end{array}
			}
		}
		\]
		\[    \!\!\!\!\!\!\!\!\!\!\!\!\!\!\!\!\!\!\!\!\!\!\!\!\!\!\!\!\!\!\!\!\!\!\!\!
		{\footnotesize
			{\renewcommand{\arraystretch}{1.22}
				\begin{array}{|c|c|c|c|c|c|c|c|}
					\hline\multicolumn{8}{|c|}{\textbf{Subcase 2.2: } n < 2 (\lambda + 4)}\\
					\hline & \underbrace{p \ominus q}_{(a)} & \underbrace{p \oplus q}_{(b)} & \alpha & \beta & \gamma & \delta & \epsilon \\
					\hline
					\hline \parbox[t]{2mm}{\multirow{16}{*}{\rotatebox[origin=c]{90}{$p+q \not\equiv \lambda+4 \bmod{2}$}}} & \multirow{4}{*}{$< n-(\lambda+4)$} & < n-(\lambda+4) & (\lambda+4)(1-1) & \lambda+4 & \lambda+4 & (\lambda+4)(-1) & (\lambda+4)(-1) \\
					&  & = n-(\lambda+4) & (\lambda+4)(1-k) & \lambda+4 & \lambda+4 & (\lambda+4)(-k) & (\lambda+4)(-1) \\
					&  & n-(\lambda+4) < (b) < \lambda+4 & (\lambda+4)(1-0) & \lambda+4 & \lambda+4 & 0 & (\lambda+4)(-1) \\
					&  & \geq \lambda+4 & (\lambda+4)(0-0) & 0 & \lambda+4 & 0 & (\lambda+4)(-1) \\
					\cline{2-8}
					& \multirow{4}{*}{$= n-(\lambda+4)$} & < n-(\lambda+4) & (\lambda+4)(k-1) & \lambda+4 & (\lambda+4)k & (\lambda+4)(-1) & (\lambda+4)(-1) \\
					&  & = n-(\lambda+4) & (\lambda+4)(k-k) & \lambda+4 & (\lambda+4)k & (\lambda+4)(-k) & (\lambda+4)(-1) \\
					&  & n-(\lambda+4) < (b) < \lambda+4 & (\lambda+4)(k-0) & \lambda+4 & (\lambda+4)k & 0 & (\lambda+4)(-1) \\
					&  & \geq \lambda+4 & --- & --- & --- & --- & --- \\
					\cline{2-8}
					& \multirow{4}{*}{$n-(\lambda+4) < (a) < \lambda+4$} & < n-(\lambda+4) & (\lambda+4)(0-1) & \lambda+4 & 0 & (\lambda+4)(-1) & (\lambda+4)(-1) \\
					&  & = n-(\lambda+4) & (\lambda+4)(0-k) & \lambda+4 & 0 & (\lambda+4)(-k) & (\lambda+4)(-1) \\
					&  & n-(\lambda+4) < (b) < \lambda+4 & (\lambda+4)(0-0) & \lambda+4 & 0 & 0 & (\lambda+4)(-1) \\
					&  & \geq \lambda+4 & --- & --- & --- & --- & --- \\
					\cline{2-8}
					& \multirow{4}{*}{$\geq \lambda+4$} & < n-(\lambda+4) & (\lambda+4)(0-0) & \lambda+4 & 0 & (\lambda+4)(-1) & 0 \\
					&  & = n-(\lambda+4) & --- & --- & --- & --- & --- \\
					&  & n-(\lambda+4) < (b) < \lambda+4 & --- & --- & --- & --- & --- \\
					&  & \geq \lambda+4 & --- & --- & --- & --- & --- \\\hline
					\parbox[t]{2mm}{\rotatebox[origin=c]{90}{\text{\scriptsize { } otherwise { }}}} & \multicolumn{2}{c|}{\text{ any case }} & 0 & 0 & 0 & 0 & 0 \\
					\hline
				\end{array}
			}
		}
		\]
		\caption{Subcases of Case 2.}\label{tbl4}
	\end{table}

Note that $P_{n, \frac{n-9}{2}} = -Q_{n, \frac{n-9}{2}}$ and $P_{n, \frac{n-7}{2}} = -Q_{n, \frac{n-7}{2}}$. In Figure~\ref{fig:PeQ2} we present $P_{11, 1}$ and $P_{11, 2}$.

\begin{figure}[h!]
	\centering
	{\scriptsize
			\begin{tabular}{|c|c|c|c|c|c|c|c|c|c|c|}
				\hline & & & & & & \cellcolor{royalblue}0 & & & & \\
				\hline & & & & & \cellcolor{skyblue}1 & & \cellcolor{royalblue}0 & & & \\
				\hline & & & & \cellcolor{skyblue}1 & & \cellcolor{skyblue}1 & & \cellcolor{royalblue}0 & & \\
				\hline & & & \cellcolor{skyblue}1 & & \cellcolor{skyblue}1 & & \cellcolor{skyblue}1 & & \cellcolor{royalblue}0 & \\
				\hline & & \cellcolor{skyblue}1 & & \cellcolor{skyblue}1 & & \cellcolor{skyblue}1 & & \cellcolor{skyblue}1 & & \cellcolor{royalblue}0 \\
				\hline & \cellcolor{skyblue}1 & & \cellcolor{skyblue}1 & & \cellcolor{skyblue}1 & & \cellcolor{skyblue}1 & & \cellcolor{skyblue}1 & \\
				\hline \cellcolor{royalblue}0 & & \cellcolor{skyblue}1 & & \cellcolor{skyblue}1 & & \cellcolor{skyblue}1 & & \cellcolor{skyblue}1 & & \\
				\hline & \cellcolor{royalblue}0 & & \cellcolor{skyblue}1 & & \cellcolor{skyblue}1 & & \cellcolor{skyblue}1 & & & \\
				\hline & & \cellcolor{royalblue}0 & & \cellcolor{skyblue}1 & & \cellcolor{skyblue}1 & & & & \\
				\hline & & & \cellcolor{royalblue}0 & & \cellcolor{skyblue}1 & & & & & \\
				\hline & & & & \cellcolor{royalblue}0 & & & & & & \\
				\hline
			\end{tabular}\\\vspace{0.5cm}
			\begin{tabular}{|c|c|c|c|c|c|c|c|c|c|c|}
				\hline & & & & & \cellcolor{royalblue}1 & & & & & \\
				\hline & & & & \cellcolor{skyblue}1 & & \cellcolor{royalblue}1 & & & & \\
				\hline & & & \cellcolor{skyblue}1 & & \cellcolor{skyblue}1 & & \cellcolor{royalblue}1 & & & \\
				\hline & & \cellcolor{skyblue}1 & & \cellcolor{skyblue}1 & & \cellcolor{skyblue}1 & & \cellcolor{royalblue}1 & & \\
				\hline & \cellcolor{skyblue}1 & & \cellcolor{skyblue}1 & & \cellcolor{skyblue}1 & & \cellcolor{skyblue}1 & & \cellcolor{royalblue}1 & \\
				\hline \cellcolor{royalblue}1 & & \cellcolor{skyblue}1 & & \cellcolor{skyblue}1 & & \cellcolor{skyblue}1 & & \cellcolor{skyblue}1 & & \cellcolor{royalblue}1 \\
				\hline & \cellcolor{royalblue}1 & & \cellcolor{skyblue}1 & & \cellcolor{skyblue}1 & & \cellcolor{skyblue}1 & & \cellcolor{skyblue}1 & \\
				\hline & & \cellcolor{royalblue}1 & & \cellcolor{skyblue}1 & & \cellcolor{skyblue}1 & & \cellcolor{skyblue}1 & & \\
				\hline & & & \cellcolor{royalblue}1 & & \cellcolor{skyblue}1 & & \cellcolor{skyblue}1 & & & \\
				\hline & & & & \cellcolor{royalblue}1 & & \cellcolor{skyblue}1 & & & & \\
				\hline & & & & & \cellcolor{royalblue}1 & & & & & \\
				\hline
			\end{tabular}
	}
	\caption{$P_{11, 1}$ and $P_{11, 2}$}
	\label{fig:PeQ2}
\end{figure}

Since $E_{n,\lambda}$ is the zero vector if $\lambda \in \{ \frac{n-9}{2}, \frac{n-7}{2} \}$ and a non-zero vector if $\lambda \in \{ -3, \ldots, \frac{n-11}{2} \} \cup \{ \frac{n-5}{2}, \ldots, n-4 \}$, we have the following result.

\begin{corollary}
	For every odd $n \ge 4$ and $\lambda \in \{ -3, \ldots, \frac{n-11}{2} \} \cup \{ \frac{n-5}{2}, \ldots, n-4 \}$, $E_{n,\lambda}$ is an eigenvector of $\mathcal{Q}(n)$ associated with the eigenvalue $\lambda$.
\end{corollary}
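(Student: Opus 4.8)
The plan is to read the corollary as an immediate consequence of the theorem proved just above, whose conclusion $A E_{n,\lambda} = \lambda E_{n,\lambda}$ is already the eigenvalue equation. The only remaining content is that $E_{n,\lambda}$ is not the zero vector for the stated values of $\lambda$, since the zero vector is never an eigenvector. So I would present the corollary as ``theorem plus nonvanishing'' and devote the work to producing a single nonzero coordinate of $E_{n,\lambda}$.

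To detect nonvanishing I would exploit the reflection identity $(Q_{n,\lambda})_{(i,j)} = -(P_{n,\lambda})_{(n+1-i,j)}$ recorded in the definition of $Q_{n,\lambda}$, which gives the compact formula
\[
(E_{n,\lambda})_{(i,j)} = (P_{n,\lambda})_{(i,j)} - (P_{n,\lambda})_{(n+1-i,j)} .
\]
Thus $E_{n,\lambda}$ vanishes exactly when $P_{n,\lambda}$ is invariant under the vertical reflection $i \mapsto n+1-i$, and it suffices to find one coordinate that breaks this symmetry. The natural candidate sits on the ``$k$-diagonal'': setting $c = n-\lambda-3$, one has $1 \ominus c = n-(\lambda+4)$, so $(P_{n,\lambda})_{(1,c)} = k$ unconditionally, and therefore $(E_{n,\lambda})_{(1,c)} = k - (P_{n,\lambda})_{(n,c)}$. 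Note that $c \in [n]$ for every admissible $\lambda$, so this coordinate always makes sense.

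The heart of the argument is the evaluation of the reflected entry $(P_{n,\lambda})_{(n,c)}$. At this cell one computes $n \ominus c = \lambda+3$ and $n \oplus c = n-(\lambda+4)$, and I would split into the two regimes $\lambda \le \frac{n-11}{2}$ and $\lambda \ge \frac{n-5}{2}$, which mirror the dichotomy $n \gtrless 2(\lambda+4)$ underlying Table~\ref{tbl4}. In the first regime $n \ominus c < n-(\lambda+4)$ while $n \oplus c = n-(\lambda+4) \ge \lambda+4$, so the diamond condition of the $1$-case fails and $(P_{n,\lambda})_{(n,c)} = 0$; in the second regime $n \ominus c = \lambda+3 > n-(\lambda+4)$, so the cell lies outside the band and again $(P_{n,\lambda})_{(n,c)} = 0$. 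In both cases $(E_{n,\lambda})_{(1,c)} = k$, and since $k = (\lambda+4)-\frac{n-1}{2}$ vanishes only at $\lambda = \frac{n-9}{2}$, which is excluded, we obtain $k \neq 0$ and hence $E_{n,\lambda} \neq 0$ on the whole set $\{-3,\ldots,\frac{n-11}{2}\} \cup \{\frac{n-5}{2},\ldots,n-4\}$. For completeness I would observe that at the two excluded values $\lambda \in \{\frac{n-9}{2}, \frac{n-7}{2}\}$ the same coordinate returns $0$ (either because $k=0$, or because the reflected cell then also lands on the $k$-diagonal), consistent with the relation $P_{n,\lambda} = -Q_{n,\lambda}$ noted after the proof of the theorem.

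The main obstacle here is bookkeeping rather than conceptual: one must confirm that the single chosen coordinate stays nonzero across the entire union of intervals and correctly pin down the boundary behaviour at the two excluded half-integers, which is precisely where the diamond and parity constraints defining $P_{n,\lambda}$ interact most delicately. Because the eigenvalue equation itself is already delivered by the preceding theorem, verifying this one coordinate is all that the corollary demands.
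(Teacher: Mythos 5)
Your proposal is correct and follows essentially the same route as the paper: the eigenvalue equation is supplied by the preceding theorem, and the only remaining content of the corollary is that $E_{n,\lambda}\neq 0$ for the listed values of $\lambda$. The paper simply asserts this nonvanishing (observing only that $P_{n,\lambda}=-Q_{n,\lambda}$ at the two excluded values $\tfrac{n-9}{2}$ and $\tfrac{n-7}{2}$), whereas you verify it explicitly through the coordinate $(1,\,n-\lambda-3)$, and your computation — giving $(E_{n,\lambda})_{(1,n-\lambda-3)}=k\neq 0$ precisely off the excluded values — checks out.
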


\medskip

In Figure~\ref{9eigenvectors} we present all non-zero vectors of the family $\{ E_{11,\lambda} \}_{-3 \leq \lambda \leq n-4}$.

\begin{landscape}
	
	\begin{figure}
		\centering
		{\tiny
			\begin{tabular}{ccc}
				\begin{tabular}{|c|c|c|c|c|c|c|c|c|c|c|}
					\hline \cellcolor{sandstorm}4 & & & & & & & & & & \cellcolor{royalblue}-4 \\
					\hline & \cellcolor{pastelyellow}-1 & & & & & & & & \cellcolor{skyblue}1 & \\
					\hline & & \cellcolor{pastelyellow}-1 & & & & & & \cellcolor{skyblue}1 & & \\
					\hline & & & \cellcolor{pastelyellow}-1 & & & & \cellcolor{skyblue}1 & & & \\
					\hline & & & & \cellcolor{pastelyellow}-1 & & \cellcolor{skyblue}1 & & & & \\
					\hline & & & & & \phantom{-1} & & & & & \\
					\hline & & & & \cellcolor{skyblue}1 & & \cellcolor{pastelyellow}-1 & & & & \\
					\hline & & & \cellcolor{skyblue}1 & & & & \cellcolor{pastelyellow}-1 & & & \\
					\hline & & \cellcolor{skyblue}1 & & & & & & \cellcolor{pastelyellow}-1 & & \\
					\hline & \cellcolor{skyblue}1 & & & & & & & & \cellcolor{pastelyellow}-1 & \\
					\hline \cellcolor{royalblue}-4 & & & & & & & & & & \cellcolor{sandstorm}4 \\
					\hline
				\end{tabular}
				&
				\begin{tabular}{|c|c|c|c|c|c|c|c|c|c|c|}
					\hline & \cellcolor{sandstorm}3 & & & & & & & & \cellcolor{royalblue}-3 & \\
					\hline \cellcolor{sandstorm}3 & & \cellcolor{pastelyellow}-1 & & & & & & \cellcolor{skyblue}1 & & \cellcolor{royalblue}-3 \\
					\hline & \cellcolor{pastelyellow}-1 & & \cellcolor{pastelyellow}-1 & & & & \cellcolor{skyblue}1 & & \cellcolor{skyblue}1 & \\
					\hline & & \cellcolor{pastelyellow}-1 & & \cellcolor{pastelyellow}-1 & & \cellcolor{skyblue}1 & & \cellcolor{skyblue}1 & & \\
					\hline & & & \cellcolor{pastelyellow}-1 & & & & \cellcolor{skyblue}1 & & & \\
					\hline & & & & & \phantom{-1} & & & & & \\
					\hline & & & \cellcolor{skyblue}1 & & & & \cellcolor{pastelyellow}-1 & & & \\
					\hline & & \cellcolor{skyblue}1 & & \cellcolor{skyblue}1 & & \cellcolor{pastelyellow}-1 & & \cellcolor{pastelyellow}-1 & & \\
					\hline & \cellcolor{skyblue}1 & & \cellcolor{skyblue}1 & & & & \cellcolor{pastelyellow}-1 & & \cellcolor{pastelyellow}-1 & \\
					\hline \cellcolor{royalblue}-3 & & \cellcolor{skyblue}1 & & & & & & \cellcolor{pastelyellow}-1 & & \cellcolor{sandstorm}3 \\
					\hline & \cellcolor{royalblue}-3 & & & & & & & & \cellcolor{sandstorm}3 & \\
					\hline
				\end{tabular}
				&
				\begin{tabular}{|c|c|c|c|c|c|c|c|c|c|c|}
					\hline & & \cellcolor{sandstorm}2 & & & & & & \cellcolor{royalblue}-2 & & \\
					\hline & \cellcolor{sandstorm}2 & & \cellcolor{pastelyellow}-1 & & & & \cellcolor{skyblue}1 & & \cellcolor{royalblue}-2 & \\
					\hline \cellcolor{sandstorm}2 & & \cellcolor{pastelyellow}-1 & & \cellcolor{pastelyellow}-1 & & \cellcolor{skyblue}1 & & \cellcolor{skyblue}1 & & \cellcolor{royalblue}-2 \\
					\hline & \cellcolor{pastelyellow}-1 & & \cellcolor{pastelyellow}-1 & & & & \cellcolor{skyblue}1 & & \cellcolor{skyblue}1 & \\
					\hline & & \cellcolor{pastelyellow}-1 & & & & & & \cellcolor{skyblue}1 & & \\
					\hline & & & & & \phantom{-1} & & & & & \\
					\hline & & \cellcolor{skyblue}1 & & & & & & \cellcolor{pastelyellow}-1 & & \\
					\hline & \cellcolor{skyblue}1 & & \cellcolor{skyblue}1 & & & & \cellcolor{pastelyellow}-1 & & \cellcolor{pastelyellow}-1 & \\
					\hline \cellcolor{royalblue}-2 & & \cellcolor{skyblue}1 & & \cellcolor{skyblue}1 & & \cellcolor{pastelyellow}-1 & & \cellcolor{pastelyellow}-1 & & \cellcolor{sandstorm}2 \\
					\hline & \cellcolor{royalblue}-2 & & \cellcolor{skyblue}1 & & & & \cellcolor{pastelyellow}-1 & & \cellcolor{sandstorm}2 & \\
					\hline & & \cellcolor{royalblue}-2 & & & & & & \cellcolor{sandstorm}2 & & \\
					\hline
				\end{tabular}
				\\ \\
				\begin{tabular}{|c|c|c|c|c|c|c|c|c|c|c|}
					\hline & & & \cellcolor{sandstorm}1 & & & & \cellcolor{royalblue}-1 & & & \\
					\hline & & \cellcolor{sandstorm}1 & & \cellcolor{pastelyellow}-1 & & \cellcolor{skyblue}1 & & \cellcolor{royalblue}-1 & & \\
					\hline & \cellcolor{sandstorm}1 & & \cellcolor{pastelyellow}-1 & & & & \cellcolor{skyblue}1 & & \cellcolor{royalblue}-1 & \\
					\hline \cellcolor{sandstorm}1 & & \cellcolor{pastelyellow}-1 & & & & & & \cellcolor{skyblue}1 & & \cellcolor{royalblue}-1 \\
					\hline & \cellcolor{pastelyellow}-1 & & & & & & & & \cellcolor{skyblue}1 & \\
					\hline & & & & & \phantom{-1} & & & & & \\
					\hline & \cellcolor{skyblue}1 & & & & & & & & \cellcolor{pastelyellow}-1 & \\
					\hline \cellcolor{royalblue}-1 & & \cellcolor{skyblue}1 & & & & & & \cellcolor{pastelyellow}-1 & & \cellcolor{sandstorm}1 \\
					\hline & \cellcolor{royalblue}-1 & & \cellcolor{skyblue}1 & & & & \cellcolor{pastelyellow}-1 & & \cellcolor{sandstorm}1 & \\
					\hline & & \cellcolor{royalblue}-1 & & \cellcolor{skyblue}1 & & \cellcolor{pastelyellow}-1 & & \cellcolor{sandstorm}1 & & \\
					\hline & & & \cellcolor{royalblue}-1 & & & & \cellcolor{sandstorm}1 & & & \\
					\hline
				\end{tabular}
				&
				\begin{tabular}{|c|c|c|c|c|c|c|c|c|c|c|}
					\hline & & & & \cellcolor{royalblue}2 & & \cellcolor{sandstorm}-2 & & & & \\
					\hline & & & \cellcolor{skyblue}1 & & & & \cellcolor{pastelyellow}-1 & & & \\
					\hline & & \cellcolor{skyblue}1 & & \cellcolor{celadon}-1 & & \cellcolor{darkcyan}1 & & \cellcolor{pastelyellow}-1 & & \\
					\hline & \cellcolor{skyblue}1 & & \cellcolor{celadon}-1 & & & & \cellcolor{darkcyan}1 & & \cellcolor{pastelyellow}-1 & \\
					\hline \cellcolor{royalblue}2 & & \cellcolor{celadon}-1 & & & & & & \cellcolor{darkcyan}1 & & \cellcolor{sandstorm}-2 \\
					\hline & & & & & \phantom{-1} & & & & & \\
					\hline \cellcolor{sandstorm}-2 & & \cellcolor{darkcyan}1 & & & & & & \cellcolor{celadon}-1 & & \cellcolor{royalblue}2 \\
					\hline & \cellcolor{pastelyellow}-1 & & \cellcolor{darkcyan}1 & & & & \cellcolor{celadon}-1 & & \cellcolor{skyblue}1 & \\
					\hline & & \cellcolor{pastelyellow}-1 & & \cellcolor{darkcyan}1 & & \cellcolor{celadon}-1 & & \cellcolor{skyblue}1 & & \\
					\hline & & & \cellcolor{pastelyellow}-1 & & & & \cellcolor{skyblue}1 & & & \\
					\hline & & & & \cellcolor{sandstorm}-2 & & \cellcolor{royalblue}2 & & & & \\
					\hline
				\end{tabular}
				&
				\begin{tabular}{|c|c|c|c|c|c|c|c|c|c|c|}
					\hline & & & \cellcolor{royalblue}3 & & & & \cellcolor{sandstorm}-3 & & & \\
					\hline & & \cellcolor{skyblue}1 & & \cellcolor{royalblue}3 & & \cellcolor{sandstorm}-3 & & \cellcolor{pastelyellow}-1 & & \\
					\hline & \cellcolor{skyblue}1 & & \cellcolor{skyblue}1 & & & & \cellcolor{pastelyellow}-1 & & \cellcolor{pastelyellow}-1 & \\
					\hline \cellcolor{royalblue}3 & & \cellcolor{skyblue}1 & & \cellcolor{celadon}-2 & & \cellcolor{darkcyan}2 & & \cellcolor{pastelyellow}-1 & & \cellcolor{sandstorm}-3 \\
					\hline & \cellcolor{royalblue}3 & & \cellcolor{celadon}-2 & & & & \cellcolor{darkcyan}2 & & \cellcolor{sandstorm}-3 & \\
					\hline & & & & & \phantom{-1} & & & & & \\
					\hline & \cellcolor{sandstorm}-3 & & \cellcolor{darkcyan}2 & & & & \cellcolor{celadon}-2 & & \cellcolor{royalblue}3 & \\
					\hline \cellcolor{sandstorm}-3 & & \cellcolor{pastelyellow}-1 & & \cellcolor{darkcyan}2 & & \cellcolor{celadon}-2 & & \cellcolor{skyblue}1 & & \cellcolor{royalblue}3 \\
					\hline & \cellcolor{pastelyellow}-1 & & \cellcolor{pastelyellow}-1 & & & & \cellcolor{skyblue}1 & & \cellcolor{skyblue}1 & \\
					\hline & & \cellcolor{pastelyellow}-1 & & \cellcolor{sandstorm}-3 & & \cellcolor{royalblue}3 & & \cellcolor{skyblue}1 & & \\
					\hline & & & \cellcolor{sandstorm}-3 & & & & \cellcolor{royalblue}3 & & & \\
					\hline
				\end{tabular} \\ \\
				\begin{tabular}{|c|c|c|c|c|c|c|c|c|c|c|}
					\hline & & \cellcolor{royalblue}4 & & & & & & \cellcolor{sandstorm}-4 & & \\
					\hline & \cellcolor{skyblue}1 & & \cellcolor{royalblue}4 & & & & \cellcolor{sandstorm}-4 & & \cellcolor{pastelyellow}-1 & \\
					\hline \cellcolor{royalblue}4 & & \cellcolor{skyblue}1 & & \cellcolor{royalblue}4 & & \cellcolor{sandstorm}-4 & & \cellcolor{pastelyellow}-1 & & \cellcolor{sandstorm}-4 \\
					\hline & \cellcolor{royalblue}4 & & \cellcolor{skyblue}1 & & & & \cellcolor{pastelyellow}-1 & & \cellcolor{sandstorm}-4 & \\
					\hline & & \cellcolor{royalblue}4 & & \cellcolor{celadon}-3 & & \cellcolor{darkcyan}3 & & \cellcolor{sandstorm}-4 & & \\
					\hline & & & & & \phantom{-1} & & & & & \\
					\hline & & \cellcolor{sandstorm}-4 & & \cellcolor{darkcyan}3 & & \cellcolor{celadon}-3 & & \cellcolor{royalblue}4 & & \\
					\hline & \cellcolor{sandstorm}-4 & & \cellcolor{pastelyellow}-1 & & & & \cellcolor{skyblue}1 & & \cellcolor{royalblue}4 & \\
					\hline \cellcolor{sandstorm}-4 & & \cellcolor{pastelyellow}-1 & & \cellcolor{sandstorm}-4 & & \cellcolor{royalblue}4 & & \cellcolor{skyblue}1 & & \cellcolor{royalblue}4 \\
					\hline & \cellcolor{pastelyellow}-1 & & \cellcolor{sandstorm}-4 & & & & \cellcolor{royalblue}4 & & \cellcolor{skyblue}1 & \\
					\hline & & \cellcolor{sandstorm}-4 & & & & & & \cellcolor{royalblue}4 & & \\
					\hline
				\end{tabular}
				&
				\begin{tabular}{|c|c|c|c|c|c|c|c|c|c|c|}
					\hline & \cellcolor{royalblue}5 & & & & & & & & \cellcolor{sandstorm}-5 & \\
					\hline \cellcolor{royalblue}5 & & \cellcolor{royalblue}5 & & & & & & \cellcolor{sandstorm}-5 & & \cellcolor{sandstorm}-5 \\
					\hline & \cellcolor{royalblue}5 & & \cellcolor{royalblue}5 & & & & \cellcolor{sandstorm}-5 & & \cellcolor{sandstorm}-5 & \\
					\hline & & \cellcolor{royalblue}5 & & \cellcolor{royalblue}5 & & \cellcolor{sandstorm}-5 & & \cellcolor{sandstorm}-5 & & \\
					\hline & & & \cellcolor{royalblue}5 & & & & \cellcolor{sandstorm}-5 & & & \\
					\hline & & & & & \phantom{-1} & & & & & \\
					\hline & & & \cellcolor{sandstorm}-5 & & & & \cellcolor{royalblue}5 & & & \\
					\hline & & \cellcolor{sandstorm}-5 & & \cellcolor{sandstorm}-5 & & \cellcolor{royalblue}5 & & \cellcolor{royalblue}5 & & \\
					\hline & \cellcolor{sandstorm}-5 & & \cellcolor{sandstorm}-5 & & & & \cellcolor{royalblue}5 & & \cellcolor{royalblue}5 & \\
					\hline \cellcolor{sandstorm}-5 & & \cellcolor{sandstorm}-5 & & & & & & \cellcolor{royalblue}5 & & \cellcolor{royalblue}5 \\
					\hline & \cellcolor{sandstorm}-5 & & & & & & & & \cellcolor{royalblue}5 & \\
					\hline
				\end{tabular}
				&
				\begin{tabular}{|c|c|c|c|c|c|c|c|c|c|c|}
					\hline \cellcolor{royalblue}6 & & & & & & & & & & \cellcolor{sandstorm}-6 \\
					\hline & \cellcolor{royalblue}6 & & & & & & & & \cellcolor{sandstorm}-6 & \\
					\hline & & \cellcolor{royalblue}6 & & & & & & \cellcolor{sandstorm}-6 & & \\
					\hline & & & \cellcolor{royalblue}6 & & & & \cellcolor{sandstorm}-6 & & & \\
					\hline & & & & \cellcolor{royalblue}6 & & \cellcolor{sandstorm}-6 & & & & \\
					\hline & & & & & \phantom{-1} & & & & & \\
					\hline & & & & \cellcolor{sandstorm}-6 & & \cellcolor{royalblue}6 & & & & \\
					\hline & & & \cellcolor{sandstorm}-6 & & & & \cellcolor{royalblue}6 & & & \\
					\hline & & \cellcolor{sandstorm}-6 & & & & & & \cellcolor{royalblue}6 & & \\
					\hline & \cellcolor{sandstorm}-6 & & & & & & & & \cellcolor{royalblue}6 & \\
					\hline \cellcolor{sandstorm}-6 & & & & & & & & & & \cellcolor{royalblue}6 \\
					\hline
				\end{tabular}
			\end{tabular}
		}
		\caption{First row, from left to right: $E_{11,-3}$, $E_{11,-2}$, and $E_{11,-1}$. Second row, from left to right: $E_{11,0}$, $E_{11,3}$, and $E_{11,4}$. Third row, from left to right: $E_{11,5}$, $E_{11,6}$, and $E_{11,7}$.}
		\label{9eigenvectors}
	\end{figure}
	
\end{landscape}

\section{The eigenvalue $n-4$}\label{Sec-n-4}

The eigenvalue $n-4$ of $\mathcal{Q}(n)$ also appears in Section~\ref{Sec-SequenceEigenvalues} for every odd $n \ge 4$. Now we present a family of eigenvectors of $\mathcal{Q}(n)$ associated with $n-4$ and establish a lower bound for its multiplicity for $n \ge 3$ not necessarily odd.

Given $n \geq 3$ and $1 \leq \ell \leq n$, let $C_{n,\ell}$, $R_{n,\ell}$ and $F_{n, \ell}$ be the vectors defined as follows.

\begin{align*}
	(C_{n,\ell})_{(i,j)} & =
	\begin{cases}
		1, & \text{if $j = \ell$ or $j = n+1 - \ell$}; \\
		0, & \text{otherwise}
	\end{cases}\\
	(R_{n,\ell})_{(i,j)} & =
	\begin{cases}
		-1, & \text{if $i = \ell$ or $i = n+1 - \ell$}; \\
		0, & \text{otherwise}
	\end{cases}
	\intertext{and}
	F_{n, \ell} & = C_{n,\ell} + R_{n,\ell}
\end{align*}

In Figure \ref{fig:CReF} we display $C_{5,2}$, $R_{5,2}$ and $F_{5,2}$.

\begin{figure}[hhh]
	\centering
	{\scriptsize
			\begin{tabular}{|c|c|c|c|c|}
				\hline \hphantom{-1} & \cellcolor{skyblue}\hphantom{-}1 & \hphantom{-1} & \cellcolor{skyblue}\hphantom{-}1 & \hphantom{-1} \\
				\hline & \cellcolor{skyblue}1 & & \cellcolor{skyblue}1 & \\
				\hline & \cellcolor{skyblue}1 & & \cellcolor{skyblue}1 & \\
				\hline & \cellcolor{skyblue}1 & & \cellcolor{skyblue}1 & \\
				\hline & \cellcolor{skyblue}1 & & \cellcolor{skyblue}1 & \\
                \hline
			\end{tabular} \quad + \quad
			\begin{tabular}{|c|c|c|c|c|}
				\hline & & & & \\
				\hline \cellcolor{pastelyellow}-1 & \cellcolor{pastelyellow}-1 & \cellcolor{pastelyellow}-1 & \cellcolor{pastelyellow}-1 & \cellcolor{pastelyellow}-1 \\
				\hline & & & & \\
				\hline \cellcolor{pastelyellow}-1 & \cellcolor{pastelyellow}-1 & \cellcolor{pastelyellow}-1 & \cellcolor{pastelyellow}-1 & \cellcolor{pastelyellow}-1 \\
				\hline & & & & \\
				\hline
			\end{tabular} \quad = \quad
			\begin{tabular}{|c|c|c|c|c|}
				\hline & \cellcolor{skyblue}1 & & \cellcolor{skyblue}1 & \\
				\hline \cellcolor{pastelyellow}-1 & \hphantom{-1} & \cellcolor{pastelyellow}-1 & \hphantom{-1} & \cellcolor{pastelyellow}-1 \\
				\hline & \cellcolor{skyblue}1 & & \cellcolor{skyblue}1 & \\
				\hline \cellcolor{pastelyellow}-1 & \hphantom{-1} & \cellcolor{pastelyellow}-1 & \hphantom{-1} & \cellcolor{pastelyellow}-1 \\
				\hline & \cellcolor{skyblue}1 & & \cellcolor{skyblue}1 & \\
				\hline
			\end{tabular}
	}
	\caption{$C_{5,2}$, $R_{5,2}$, and $F_{5,2}$}
	\label{fig:CReF}
\end{figure}

Clearly we have $C_{n,\ell} = C_{n,n+1-\ell}$, $R_{n,\ell} = R_{n,n+1-\ell}$, and $F_{n,\ell} = F_{n,n+1-\ell}$, for every $1 \leq \ell \leq n$. Note also that $F_{n,1} + F_{n,2} + \cdots + F_{n, \left\lceil \frac{n}{2} \right\rceil} = 0$ and so $\left\{ F_{n,1}, F_{n,2}, \ldots, F_{n, \left\lceil \frac{n}{2} \right\rceil} \right\}$ is not linearly independent. However, considering the vectors $E_{n,n-4}$ obtained from the family defined in \eqref{family_vectors} for odd $n \ge 4$, it is immediate that $E_{n,n-4}$ can be defined also for $n=3$ and it is an eigenvector. Furthermore, we have the following result.

\begin{theorem} Let $n \geq 3$.
\begin{enumerate}
\item If $n$ is even, $\left\{ F_{n,1}, \ldots, F_{n,\frac{n-2}{2}} \right\}$ is a linearly independent set of $\frac{n-2}{2}$ eigenvectors associated with $n-4$.
\item If $n$ is odd, $\left\{ E_{n,n-4}, F_{n,1}, \ldots, F_{n,\frac{n-1}{2}} \right\}$ is a linearly independent set of $\frac{n+1}{2}$ eigenvectors associated with $n-4$.
\end{enumerate}
\end{theorem}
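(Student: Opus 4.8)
The statement has two halves: that every listed vector belongs to $\mathcal{E}_{\mathcal{Q}(n)}(n-4)$, and that the listed vectors are linearly independent. For odd $n$ the vector $E_{n,n-4}$ is already an eigenvector for $n-4$ (it is the instance $\lambda+4=n$ of the family \eqref{family_vectors}, valid also for $n=3$), so the eigenvector half reduces to proving $A\,F_{n,\ell}=(n-4)F_{n,\ell}$, i.e. $(A+4I_n)F_{n,\ell}=n\,F_{n,\ell}$, for each $\ell$. Here I would reuse the line-sum identity behind the previous proof: for any vector $v$, $((A+4I_n)v)_{(p,q)}$ equals the sum of $v$ over the entire row $p$, the entire column $q$, the anti-diagonal $i+j=p+q$ and the diagonal $i-j=p-q$, because each of these four lines passes through $(p,q)$ exactly once while every neighbour of $(p,q)$ lies on exactly one of them.

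Write $S=\{\ell,n+1-\ell\}$; since $\ell\le\lceil n/2\rceil-1$ these are two distinct indices, and $(F_{n,\ell})_{(i,j)}=[j\in S]-[i\in S]$, using the Iverson bracket $[\cdot]$. The row-$p$ sum of $F_{n,\ell}$ is $2-n[p\in S]$ and the column-$q$ sum is $n[q\in S]-2$, so together they give $n([q\in S]-[p\in S])=n\,(F_{n,\ell})_{(p,q)}$; it therefore suffices to show that the two diagonal sums vanish. On the anti-diagonal $i+j=p+q$ the involution $(i,j)\mapsto(j,i)$ maps the on-board cells to themselves and turns a cell with column in $S$ into one with row in $S$, so the count of the former equals that of the latter and $\sum_{i+j=p+q}(F_{n,\ell})_{(i,j)}=0$. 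On the diagonal $i-j=d$ there is no such involution, but a direct inspection of the on-board conditions shows that the cell in column $\ell$ lies on the board precisely when the cell in row $n+1-\ell$ does (both reduce to $1-\ell\le d\le n-\ell$), and likewise for the complementary pair; hence again the number of on-board cells with column in $S$ matches the number with row in $S$, and this diagonal sum is $0$ too. Combining the four contributions yields $(A+4I_n)F_{n,\ell}=n\,F_{n,\ell}$.

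For linear independence the crucial remark is that any combination $\sum_\ell c_\ell F_{n,\ell}$ takes the normal form $(i,j)\mapsto\psi(j)-\psi(i)$, where $\psi(m)=\sum_\ell c_\ell[m\in\{\ell,n+1-\ell\}]$ is a single function of one index. In the even case, $\sum_{\ell=1}^{(n-2)/2}c_\ell F_{n,\ell}=0$ forces $\psi(j)=\psi(i)$ for all $i,j$, so $\psi$ is constant; evaluating at the middle index $m=n/2$, which belongs to no set $\{\ell,n+1-\ell\}$ with $\ell\le(n-2)/2$, shows $\psi\equiv0$; and since $\psi(m)=c_m$ for $1\le m\le(n-2)/2$, all $c_m$ vanish.

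In the odd case I would first eliminate the $E$-term. The vector $E_{n,n-4}$ is supported on the two main diagonals, with value $k=\tfrac{n+1}{2}$ on $i=j$ and $-k$ on $i+j=n+1$; so evaluating $c_0E_{n,n-4}+\sum_\ell c_\ell F_{n,\ell}=0$ at any cell off both diagonals gives $\psi(i)=\psi(j)$. A short connectivity argument --- each anti-diagonal pair $\{a,n+1-a\}$ can be linked to a common third index when $n\ge3$ --- then forces $\psi$ to be constant, so the $F$-part vanishes identically and the relation collapses to $c_0k([i=j]-[i+j=n+1])=0$; reading it at $(1,1)$ and using $k\ne0$ gives $c_0=0$. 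As in the even case, the central index $m=\tfrac{n+1}{2}$ then pins the constant value of $\psi$ to $0$ and forces every $c_\ell=0$. The one genuinely delicate step is the cancellation of the diagonal line-sums in the eigenvector verification, where the board boundary must be accounted for carefully; once the $\psi(j)-\psi(i)$ normal form is available, the independence argument is routine bookkeeping.
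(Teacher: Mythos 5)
Your proposal is correct and follows essentially the same route as the paper: the eigenvector verification uses the same decomposition of $((A+4I)F_{n,\ell})_{(p,q)}$ into row, column, and two diagonal line sums, with the row and column sums producing $n\,(F_{n,\ell})_{(p,q)}$ and the two diagonal sums cancelling via the reflection symmetries of $C_{n,\ell}$ and $R_{n,\ell}$ (your on-board cell count on the diagonal $i-j=d$ is exactly the paper's identity $(R_{n,\ell})_{(i,j)}=-(C_{n,\ell})_{(n+1-j,n+1-i)}$ in disguise). Your linear-independence argument via the normal form $\psi(j)-\psi(i)$ and a connectivity step is a little more elaborate than the paper's, which simply evaluates the vanishing combination at $(1,1)$ and at the cells of the central row, but both amount to reading off coordinates and reach the same conclusion.
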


\begin{proof}
In what follows we only consider the case where $n \geq 3$ is an odd integer. The case where $n \ge 3$ is even is similar.

First we note that $F_{n,\ell} \in \mathcal{E}_{\mathcal{Q}(n)} (n-4)$, for every $1 \leq \ell \leq n$. Let $(p,q) \in [n]^2$. Then

\begin{align*}
\sum_{j=1}^n (C_{n,\ell})_{(p,j)} & = - \sum_{i=1}^n (R_{n,\ell})_{(i,q)} = 2 \\
\sum_{i=1}^n (C_{n,\ell})_{(i,q)} & = \begin{cases}
                                                    n & \text{if $q = \ell$ or $q = n+1 - \ell$;} \\
                                                    0 & \text{otherwise;}
                                                   \end{cases} \\
\sum_{j=1}^n (R_{n,\ell})_{(p,j)} & = \begin{cases}
                                       -n & \text{if $p = \ell$ or $p = n+1 - \ell$;} \\
                                        0 & \text{otherwise;}
                                      \end{cases}.
\end{align*}
In addition, since $(R_{n,\ell})_{(i,j)} = - (C_{n,\ell})_{(j,i)} = - (C_{n,\ell})_{(n+1-j,n+1-i)}$,
\begin{align*}
\sum_{i+j=p+q} (F_{n,\ell})_{(i,j)} & = \sum_{i+j=p+q}^n (R_{n,\ell})_{(i,j)} + \sum_{i+j=p+q}^n (C_{n,\ell})_{(i,j)} = 0, \\
\sum_{i-j=p-q} (F_{n,\ell})_{(i,j)} & = \sum_{i-j=p-q}^n (R_{n,\ell})_{(i,j)} + \sum_{i-j=p-q}^n (C_{n,\ell})_{(i,j)} = 0.
\end{align*}
Now,
\begin{align*}
((A + 4 I_n) F_{n,\ell})_{(p,q)} & = \sum_{i=1}^n (F_{n,\ell})_{(i,q)} + \sum_{j=1}^n (F_{n,\ell})_{(p,j)} +
\sum_{i+j=p+q} (F_{n,\ell})_{(i,j)} + \sum_{i-j=p-q} (F_{n,\ell})_{(i,j)} \\
& = \sum_{i=1}^n (F_{n,\ell})_{(i,q)} + \sum_{j=1}^n (F_{n,\ell})_{(p,j)} \\
& = \begin{cases}
(n-2)+(2-n)= 0 & \text{if $p, q \in \{ \ell, n+1 - \ell \}$;} \\
(0-2)+(2-n) = -n & \text{if $p \in \{ \ell, n+1 - \ell \}$ and $q \notin \{ \ell, n+1 - \ell \}$;} \\
(n-2)+(2+0) = n & \text{if $p \notin \{ \ell, n+1 - \ell \}$ and $q \in \{ \ell, n+1 - \ell \}$;} \\
(0-2)+(2+0) = 0 & \text{if $p, q \notin \{ \ell, n+1 - \ell \}$;}
\end{cases} \\
& = n(F_{n,\ell})_{(p,q)},
\end{align*}
so $F_{n,\ell}$ is an eigenvector of $\mathcal{Q}(n)$ associated with $n-4$.
	
Finally we prove that $\left\{ E_{n,n-4}, F_{n,1}, \ldots, F_{n,\frac{n-1}{2}} \right\}$ is linearly independent.

Let $\alpha_0, \alpha_1, \ldots, \alpha_{\frac{n-1}{2}} \in \mathbb{R}$ be such that
\[
L := \alpha_0 E_{n,n-4} + \alpha_1 F_{n,1} + \cdots + \alpha_{\frac{n-1}{2}} F_{n,\frac{n-1}{2}} = 0.
\]
Then $\alpha_0 = L_{(1,1)} = 0$ and
$\alpha_j = L_{\frac{n-1}{2},j} = 0$, for all $1 \leq j \leq \frac{n-1}{2}$.
\end{proof}

\begin{corollary} Let $n \geq 3$.
\begin{enumerate}
\item If $n$ is even, $n-4$ is an eigenvalue of $\mathcal{Q}(n)$ with multiplicity at least $\frac{n-2}{2}$.
\item If $n$ is odd, $n-4$ is an eigenvalue of $\mathcal{Q}(n)$ with multiplicity at least $\frac{n+1}{2}$.
\end{enumerate}
\end{corollary}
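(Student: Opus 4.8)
The plan is to prove both parts in two stages: first show that every $F_{n,\ell}$ in the listed set (and, for odd $n$, the extra vector $E_{n,n-4}$) lies in $\mathcal{E}_{\mathcal{Q}(n)}(n-4)$, and then establish linear independence by evaluating a vanishing linear combination at a handful of well-chosen squares.

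For the eigenvector property I would work with $A+4I_n$ instead of $A$, since for any vector $v$ and any square $(p,q)$,
\[
((A+4I_n)v)_{(p,q)} = \sum_{j=1}^n v_{(p,j)} + \sum_{i=1}^n v_{(i,q)} + \sum_{i+j=p+q} v_{(i,j)} + \sum_{i-j=p-q} v_{(i,j)},
\]
the sum of $v$ over the full row, full column and both diagonals through $(p,q)$, each counting $(p,q)$ once. So it suffices to prove $(A+4I_n)F_{n,\ell}=n\,F_{n,\ell}$. The conceptual heart is that both diagonal sums of $F_{n,\ell}$ vanish: this follows from the symmetry $(R_{n,\ell})_{(i,j)} = -(C_{n,\ell})_{(j,i)} = -(C_{n,\ell})_{(n+1-j,n+1-i)}$, because the transpose $(i,j)\mapsto(j,i)$ fixes each antidiagonal $i+j=\text{const}$ while the central reflection $(i,j)\mapsto(n+1-j,n+1-i)$ fixes each diagonal $i-j=\text{const}$; in both cases the $R$-part of $F_{n,\ell}$ cancels the $C$-part line by line. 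What is left are the row and column sums, which are immediate: the column sum of $C_{n,\ell}$ is $n$ on the special columns $\ell,n+1-\ell$ and $0$ elsewhere, the row sum of $R_{n,\ell}$ is $-n$ on the special rows and $0$ elsewhere, while the row sum of $C_{n,\ell}$ and the column sum of $R_{n,\ell}$ are the constants $2$ and $-2$. Splitting into the four cases according to whether $p$ and $q$ lie in $\{\ell,n+1-\ell\}$ then yields exactly $n\,(F_{n,\ell})_{(p,q)}$ in every case. For odd $n$, membership of $E_{n,n-4}$ in $\mathcal{E}_{\mathcal{Q}(n)}(n-4)$ is already supplied by the theorem $A E_{n,\lambda}=\lambda E_{n,\lambda}$ with $\lambda=n-4$ (so $\lambda+4=n$ and $k=\frac{n+1}{2}$); that same description shows $E_{n,n-4}$ is the nonzero vector equal to $k$ on the diagonal $i=j$ and $-k$ on the antidiagonal $i+j=n+1$, vanishing off these two lines and at their intersection.

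For linear independence, suppose $L := \alpha_0 E_{n,n-4} + \sum_{\ell=1}^{(n-1)/2}\alpha_\ell F_{n,\ell} = 0$ (in the even case I drop the $E$-term and let $\ell$ run to $\frac{n-2}{2}$). The key observation is that for $\ell\le\frac{n-1}{2}$ the special indices $\ell, n+1-\ell$ all avoid the central value $\frac{n+1}{2}$ (and, for even $n$ with $\ell\le\frac{n-2}{2}$, avoid $\frac{n}{2}$). I would therefore evaluate $L$ along the central row: at $(p,q)=(\frac{n+1}{2},j)$ every $F_{n,\ell}$ with $\ell\neq j$ vanishes because neither coordinate meets its special lines, $F_{n,j}$ contributes $+1$, and $E_{n,n-4}$ vanishes because its whole central row is zero; hence $\alpha_j = L_{(\frac{n+1}{2},j)} = 0$ for every $j$. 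Finally, evaluating at the corner $(1,1)$ annihilates every $F_{n,\ell}$ (both coordinates equal $1$, which is in both or neither special set) while $E_{n,n-4}$ contributes $k\neq 0$, giving $\alpha_0=0$. The even case is identical using the central index $\frac{n}{2}$ in place of $\frac{n+1}{2}$, and with the corner step omitted.

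I expect the main obstacle to be the vanishing of the two diagonal sums of $F_{n,\ell}$: this is where the whole construction hinges, and it is essential that the $-1$'s sit on rows rather than columns so that the transpose and central-reflection symmetries make the $C$- and $R$-contributions cancel diagonal by diagonal. Once that is secured, the row/column bookkeeping and the independence argument are routine; the only remaining subtlety is to place the evaluation squares on the central line so that each isolates a single coefficient, despite the global relation $F_{n,1}+\cdots+F_{n,\lceil n/2\rceil}=0$ that forces us to stop one vector short of the full family.
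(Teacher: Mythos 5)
Your proposal is correct and follows essentially the same route as the paper: it reproves the preceding theorem by showing $(A+4I_n)F_{n,\ell}=nF_{n,\ell}$ via the symmetry $(R_{n,\ell})_{(i,j)}=-(C_{n,\ell})_{(j,i)}=-(C_{n,\ell})_{(n+1-j,n+1-i)}$ to kill the diagonal sums, invoking the earlier theorem for $E_{n,n-4}$, and extracting the coefficients of a vanishing combination at the corner and along a single row. The only (harmless) deviation is that you evaluate along the central row $\tfrac{n+1}{2}$ rather than the paper's row $\tfrac{n-1}{2}$, which if anything isolates the coefficients more cleanly.
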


\section{Open problems}\label{Sec-OpenProblems}

Denoting the set of distinct eigenvalues of $\mathcal{Q}(n)$ by $\sigma(\mathcal{Q}(n))$, the following conclusion was deduced.

The integer $n-4$ is an eigenvalue of $\mathcal{Q}(3)$ and, assuming that $n \ge 4$,

\begin{equation}\label{eq-openproblem}
	\sigma(\mathcal{Q}(n)) \cap \mathbb{Z} \supseteq
	\begin{cases}
		\left\{ -4, n-4 \right\}, & \text{if $n$ is even;} \\
		\left\{ -4, -3, \ldots, \frac{n-11}{2} \right\} \cup \left\{ \frac{n-5}{2}, \ldots, n-5, n-4 \right\}, & \text{if $n$ is odd.}
	\end{cases}
\end{equation}

However, the exact characterization of the integer eigenvalues of $\mathcal{Q}(n)$ remains as an open problem. In particular, the eigenvalues of $\mathcal{Q}(n)$, computed for several values of $n$, suggest the following conjecture.

\begin{conjecture}
For $n \ge 4$, the integers which appear in \eqref{eq-openproblem} are the unique integer eigenvalues of $\mathcal{Q}(n)$. Furthermore, when $n$ is even the eigenvalue $n-4$ has multiplicity $\frac{n-2}{2}$, and when $n$ is odd the eigenvalue $n-4$ has multiplicity $\frac{n+1}{2}$. The eigenvalues $-3, -2, \dots, \frac{n-11}{2}, \frac{n-5}{2}, \dots, n-6, n-5$ are simple.
\end{conjecture}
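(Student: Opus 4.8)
The plan is to collapse the $n^2$-dimensional eigenproblem to one of size $O(n)$ by passing to line sums, and then to analyse the resulting small system block by block using the symmetry of the board. For a vector $v$ and an integer $\mu$, rewriting the eigenvalue equation as $(A+4I)v=(\mu+4)v$ and using the decomposition of the adjacency into its four edge-directions gives, for every square $(p,q)$,
\[
(\mu+4)\,v_{(p,q)} = \rho_p + \gamma_q + \delta_{p-q} + \sigma_{p+q},
\]
where $\rho_p,\gamma_q,\delta_d,\sigma_s$ denote the sums of $v$ along row $p$, column $q$, diagonal $i-j=d$ and anti-diagonal $i+j=s$. Put $t=\mu+4$. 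When $t\neq 0$ this formula exhibits $v$ explicitly from its $6n-2$ line sums, so the map sending a $\mu$-eigenvector to its line-sum data $(\rho,\gamma,\delta,\sigma)$ is injective; substituting the formula back into the defining identities of the line sums produces a closed homogeneous system $M_n(\mu)\,(\rho,\gamma,\delta,\sigma)^{\!\top}=0$. I would first prove that $\mathcal{E}_{\mathcal{Q}(n)}(\mu)\cong\ker M_n(\mu)$. Thus, apart from the exceptional value $\mu=-4$ (where $t=0$, and whose multiplicity $(n-3)^2$ is already settled by Theorem~\ref{th-4eigenvalue}, consistently with the fact that the vectors of $\mathbf{V}_n$ have all line sums equal to zero), every assertion of the conjecture becomes a statement about the kernel of an explicit $(6n-2)\times(6n-2)$ matrix, whose entries are the all-ones blocks coming from rows and columns together with the truncated ``windowed'' diagonal-sum operators.

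Next I would exploit the dihedral symmetry $D_4$ of the chessboard, which acts on line-sum data by swapping $\rho\leftrightarrow\gamma$ and $\delta\leftrightarrow\sigma$ and by reversing indices. Since $A$ commutes with this action, $M_n(\mu)$ block-diagonalises over the isotypic components (four one-dimensional and one two-dimensional), splitting the $6n-2$ variables into smaller subsystems. The eigenvectors already constructed tell us where each eigenvalue should live: the vectors $E_{n,\lambda}$ of Section~\ref{Sec-SequenceEigenvalues} satisfy $v_{(n+1-i,j)}=-v_{(i,j)}=v_{(i,n+1-j)}\cdot(-1)$ and so are \emph{antisymmetric} under both axis reflections, whereas the $F_{n,\ell}$ of Section~\ref{Sec-n-4} are \emph{symmetric} under those reflections (and antisymmetric under the main-diagonal transpose); hence the two families populate different isotypic blocks. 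I would then compute, block by block, the dimension of $\ker M_n(\mu)$ as a function of $n$ and of the integer $\mu$.

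With the blocks in hand the three assertions separate cleanly. For the simplicity of $-3,\dots,\frac{n-11}{2},\frac{n-5}{2},\dots,n-5$ I would show that, for each such $\mu$, the axis-antisymmetric block of $M_n(\mu)$ has nullity exactly one while the remaining blocks are nonsingular, so that $E_{n,\mu}$ alone spans $\mathcal{E}_{\mathcal{Q}(n)}(\mu)$, matching the lower bound from the corollary of Section~\ref{Sec-SequenceEigenvalues}. For the multiplicity of $n-4$ I would compute the nullity of the relevant block(s) to be exactly $\tfrac{n-2}{2}$ (even $n$, all from the $F$-block) or $\tfrac{n+1}{2}$ (odd $n$, namely $\tfrac{n-1}{2}$ from the $F$-block and one from the $E$-block), matching the independent family produced in the theorem of Section~\ref{Sec-n-4}. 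Finally, for the non-existence of any further integer eigenvalue I would argue that for every integer $\mu$ outside the listed set each block of $M_n(\mu)$ is nonsingular; equivalently, that each block determinant, regarded as a polynomial in $\mu$ with $n$-dependent coefficients, has integer roots precisely at the listed values.

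I expect this last point — excluding unexpected integer eigenvalues uniformly in $n$ — to be the genuine obstacle, and it is exactly where the statement remains conjectural. The truncation of the diagonals to finite lengths destroys the clean circulant/Toeplitz structure that would let the block determinants factor, so the roots cannot simply be read off; controlling the integer zeros of an entire $n$-indexed family of structured determinants, simultaneously for all $n$, is delicate. A promising refinement is to conjugate each block by transforms adapted separately to the row--column directions and to the two diagonal directions: the all-ones row/column blocks become rank-one corrections and each $J_m-I_m$ diagonal block is trivially diagonalised, after which one may hope to evaluate the block determinants in closed form and verify that these closed forms carry no integer roots beyond those in \eqref{eq-openproblem}, thereby pinning down both the exact multiplicities and the full integer spectrum.
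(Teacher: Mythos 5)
You are attempting to prove a statement that the paper itself does not prove: it is stated as a conjecture, supported only by the containment \eqref{eq-openproblem} (the eigenvector constructions of Sections~\ref{Sec-SequenceEigenvalues} and~\ref{Sec-n-4}, which give lower bounds on multiplicities) and by computations for $n \le 100$. So there is no proof in the paper to compare against, and a complete argument would be new mathematics. Your proposal is not such an argument; it is a correct reduction followed by a research plan. To its credit, the reduction is sound: writing $((A+4I)v)_{(p,q)}=\rho_p+\gamma_q+\delta_{p-q}+\sigma_{p+q}$ in terms of row, column, diagonal and antidiagonal sums is exactly the identity underlying the paper's eigenvector verifications, and for $\mu\neq-4$ it does identify $\mathcal{E}_{\mathcal{Q}(n)}(\mu)$ with the kernel of an explicit $(6n-2)\times(6n-2)$ system $M_n(\mu)$: from consistent line-sum data one defines $v_{(p,q)}=\frac{1}{\mu+4}\left(\rho_p+\gamma_q+\delta_{p-q}+\sigma_{p+q}\right)$, whose genuine line sums then coincide with the data, so $v$ is an eigenvector, and conversely. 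Combined with Theorem~\ref{th-4eigenvalue} this correctly isolates $\mu=-4$, and your identification of the symmetry types of $E_{n,\lambda}$ (antisymmetric under both axis reflections) and $F_{n,\ell}$ (symmetric under the axis reflections, antisymmetric under transposition) is also right.

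The genuine gap is that everything after ``I would then compute, block by block, the dimension of $\ker M_n(\mu)$'' is a description of what a proof would have to deliver, not a proof. Determining $\dim\ker M_n(\mu)$ for every integer $\mu$ and every $n$ is the conjecture itself, re-expressed in line-sum coordinates, and you supply no mechanism for carrying it out. Each block determinant vanishes at every eigenvalue of $\mathcal{Q}(n)$ of the corresponding symmetry type — almost all of them irrational — so what is required is control over precisely the \emph{integer} roots of an $n$-indexed family of structured determinants, uniformly in $n$; you concede that the truncated diagonals destroy the circulant/Toeplitz structure that might have made these determinants computable, and ``one may hope to evaluate'' is not an argument. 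Nothing in the proposal excludes an accidental extra integer root of some block determinant at some large $n$, nor a second kernel vector accompanying $E_{n,\mu}$ for some $\mu$ in the listed range. Granting the (correct) isomorphism $\mathcal{E}_{\mathcal{Q}(n)}(\mu)\cong\ker M_n(\mu)$, what you have actually established is only that every eigenvalue $\mu\neq-4$ has multiplicity at most $6n-2$ — a nontrivial observation, and one that would speed up the paper's numerical verification — but the uniqueness of the integer eigenvalues, the exact multiplicity of $n-4$, and the simplicity of the remaining listed eigenvalues all remain open, exactly as you acknowledge in your final paragraph.
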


From the computations we may say that this conjecture is true for $n \le 100$.  If this conjecture is true, then $\mathcal{Q}(n)$ has merely two or $n-1$ distinct integer eigenvalues when $n$ is even or odd, respectively.

\bigskip

\noindent \textbf{Acknowledgments.}
This work is supported by the Center for Research and Development in Mathematics and Applications (CIDMA) through the Portuguese Foundation for Science and Technology (FCT - Fundação para a Ciência e a Tecnologia), reference UIDB/04106/2020. I.S.C. also thanks the support of FCT - Fundação para a Ciência e a Tecnologia via the Ph.D. Scholarship PD/BD/150538/2019.

\end{document}